\newfont{\bb}{msbm10 at 11pt}
\newfont{\bbsmall}{msbm8 at 8pt}
\def\rth{\mathbb{R}^3}
\def\R{\mathbb{R}}
\def\N{\mathbb{N}}
\def\C{\mathbb{C}}
\def\H{\mathbb{H}}
\def\M{\mathbb{M}}
\def\D{\mathbb{D}}
\def\E{\mathbb{E}}
\def\esf{\mathbb{S}}
\def\l{{\lambda}}
\newcommand{\al}{{\alpha}}
\newcommand{\te}{{\theta}}
\newcommand{\g}{{\gamma}}
\newcommand{\ve}{{\varepsilon}}
\newcommand{\Jac}{\mbox{\rm Jac}}
\def\supp{\mathop{\rm supp}}
\newtheorem{theorem}{Theorem}[section]
\newtheorem{lemma}[theorem]{Lemma}
\newtheorem{proposition}[theorem]{Proposition}
\newtheorem{remark}[theorem]{Remark}
\newtheorem{corollary}[theorem]{Corollary}
\date{April 22, 2010}
\begin{document}

\title{Parabolic stable surfaces with constant mean curvature}
\author{Jos\'e M. Manzano, \ Joaqu\'\i n P\'erez \ and \ M. Magdalena Rodr\'\i guez}
\thanks{Research partially supported by a Spanish MEC-FEDER Grant no. MTM2007-61775 and a Regional J. Andaluc\'\i a Grant no. P06-FQM-01642.\newline {\it Addresses:} Jos\'e M. Manzano ({\tt jmmanzano@ugr.es}), Joaqu\'\i n P\'erez ({\tt jperez@ugr.es}), both at the Department of Geometry and Topology, Universidad de Granada (SPAIN), M. Magdalena Rodr\'\i guez ({\tt magdalena@mat.ucm.es}) at the Department of Algebra, Universidad Complutense de Madrid (SPAIN).}

\maketitle
\begin{abstract}
We prove that if $u$ is a bounded smooth function in the kernel of a
nonnegative Schr\"{o}dinger operator $-L=-(\Delta +q)$ on a
parabolic Riemannian manifold $M$, then $u$ is either identically
zero or it has no zeros on $M$, and the linear space of such
functions is 1-dimensional. We obtain consequences for  orientable,
complete stable surfaces with constant mean curvature $H\in \R $ in
homogeneous spaces $\E (\kappa ,\tau )$ with four dimensional
isometry group. For instance, if $M$ is an orientable, parabolic,
complete immersed surface with constant mean curvature $H$ in
$\H^2\times \R $, then $|H|\leq \frac{1}{2}$ and if equality holds, then $M$
is either an entire graph or a vertical horocylinder.
\vspace{.3cm}

\noindent{\it Mathematics Subject Classification:} Primary 53A10,
   Secondary 49Q05, 53C42

\noindent{\it Key words and phrases:} Schr\"{o}dinger operator,
parabolic surface, capacity, Jacobi function, minimal surface,
surface of constant mean curvature, Thurston geometries, Heisenberg
space.
\end{abstract}

\section{Introduction}
\label{intro}
\label{sec1}

Constant mean curvature $H\in \R $ surfaces (briefly, $H$-surfaces) in
a Riemannian three-manifold $N$ constitute a natural object
of study since they are critical points of the functional
\[
F=\mbox{Area }-2H\cdot \mbox{ Volume}
\]
for compactly supported normal variations (along the paper, we will
restrict ourselves to 2-sided surfaces in order to insure that the
variational vector field of such a variation is essentially given by
a function instead of a section of the normal bundle of the
surface).

Among complete $H$-surfaces in $N^3$, the subclass of {\it
  stable} $H$-surfaces is perhaps the first one to be understood and
completely described; one of the main reasons for this is that, under
some natural conditions, limits of complete $H$-surfaces produce
complete stable $H$-surfaces (see for instance the recent Stable Limit
Leaf theorem by Meeks, P\'erez and Ros~\cite{mpr18}).  Here, stability
for an $H$-surface $M$ means that the second derivative of $F$ is
nonnegative for all compactly supported normal variations of $M$ in
$N^3$ (we remark that this is a stronger notion than the
usual stability associated to isoperimetry, where only normal
variations which preserve volume up to first order are considered). It
turns out that such a second derivative is explicitly given by an
expression of the type
\[
{\mathcal Q}(f,f)=-\int _MfLf,
\]
where $f$ is the normal part of the variational vector field of the
variation and $L$ is a Schr\"{o}dinger type operator on $M$, namely
$L=\Delta +q$ where $\Delta$ is the Laplacian with respect to the
induced metric on $M$ and $q$ is certain smooth function on $M$.
This extrinsic formulation justifies the interest of the study of
purely intrinsic operators $L=\Delta +q$ on an abstract Riemannian
surface $(M,ds^2)$ where $q\in C^{\infty }(M)$, under the condition
that $-\int _MfLf\geq 0$ for all $f\in C^{\infty }_0(M)$ (we will
brief this condition by writing $-L\geqslant 0$).

We will start by imposing a global condition on the metric $ds^2$ of
conformal nature, namely to be {\it parabolic}\footnote{A Riemannnian
  manifold $M$ is {\it parabolic} if every positive superharmonic
  function on $M$ must be constant. We remark that
all manifolds are assumed to have empty boundary.}, to conclude that
some properties of operators $L=\Delta +q$ with $-L\geqslant 0$ on a
parabolic Riemannian manifold mimic the ones of the first
eigenfunctions of a clasical Schr\"{o}dinger operator defined on a
compact subdomain. For instance, if there exists a non identically
zero {\it bounded} solution of $Lu=0$ on $M$, then $u$ vanishes
nowhere and the linear space of such functions is 1-dimensional (see
Theorem~\ref{thm1} and Corollary~\ref{cor:qag} for slightly more
general statements of this property).

Next we will apply the above intrinsic result to obtain conditions
under which a complete stable $H$-surface in a simply-connected
homogeneous 3-manifold $\E (\kappa ,\tau )$ with four dimensional
isometry group is a (vertical) multigraph or even an entire graph.
The spaces $\E (\kappa ,\tau )$ appear naturally in the
classification of the Thurston geometries, together with the space
forms $\R^3,\esf ^3,\H ^3$ (with 6-dimensional isometry group) and
the solvable Lie group Sol$_3$ (3-dimensional isometry group). $\E
(\kappa ,\tau )$ gives a common framework to the product spaces
$\esf^2\times \R $, $\H^2\times \R $ and the nontrivial Riemannian
fibrations given by the Heisenberg group Nil$_3$ (over $\R^2$), the
Berger spheres (over $\esf^2$) and the universal cover
$\widetilde{\mbox{PSL}_2}(\R )$ of the unit tangent bundle of the
hyperbolic plane (over $\H^2)$.  The geometry of $H$-surfaces in
these spaces $\E (\kappa ,\tau )$ is a field of intensive research
nowadays, especially since Abresch and Rosenberg~\cite{AbRo1}
described a Hopf-type holomorphic quadratic differential on any such
surface. As an application of the results obtained in the abstract
setting, we prove the following results:
\begin{enumerate}
\item There are no orientable, complete, immersed, parabolic, stable
  $H$-surfaces $M$ in $\H^2\times\R$ for any value of $H>\frac{1}{2}$,
  and if $H=\frac{1}{2}$ then either $M$ is a vertical horocylinder or
  an entire vertical graph (Theorem~\ref{th:localgraph} and
  Corollary~\ref{th:planes}).  If we drop
  the hypothesis of parabolicity, we obtain nonexistence of
  orientable, complete, immersed stable $H$-surfaces $M$ in
  $\H^2\times\R$ for values $H>\frac1{\sqrt{3}}-\ve $ for some $\ve
  >0$ (the case $H>\frac{1}{\sqrt{3}}$ had been proven by Nelli and
  Rosenberg~\cite{ner1}, see also Remark~9.11 in~\cite{mpr19} where
  this nonexistence property is stated for $H>\frac1{\sqrt{3}}-\ve $;
  we also remark that Salavessa obtained a related inequality for $|H|$
  valid for certain global graphs with parallel mean curvature in product
 manifolds and in calibrated manifolds~\cite{sali1,sal1}).

\item There are no orientable, complete, immersed, parabolic, stable
  $H$-surfaces $M$ in Nil$_3$ for any $H>0$, and if
$H=0$, then either $M$ is a vertical plane or an entire vertical
graph  (Theorem~\ref{th:localgraph} and
  Corollary~\ref{th:planes}). This
  result is sharp, since every solution of the Bernstein problem in
  Nil$_3$ being conformally $\C $ provides an example of the last ones
  (Fern\'andez and Mira~\cite{fm1} constructed a 2-parameter family of
  such solutions for every nonzero quadratic differential on $\C $).
  By the Daniel sister correspondence~\cite{da1}, the result in the
  first sentence of item 1 above is also sharp. The second and third
  sentences of item 1 can be also adapted to Nil$_3$, see
  Corollary~\ref{th:mejora}.

\item There are no orientable, complete, immersed, parabolic, stable
  $H$-surfaces $M$ in any Berger sphere (it is expected that this
  result holds without assuming parabolicity, see Meeks, P\'erez and
  Ros~\cite{mpr19} where it is proved assuming either compactness of
  $M$ or nonnegative scalar curvature of the ambient Berger sphere).

\item We start the study of the Bernstein problem for {\it horizontal}
  minimal graphs in Nil$_3$. A horizontal graph over a domain $\Omega $
  of the $(y,z)$-plane is the set $\{ (0,y,z)*(u(y,z),0,0) \mid (y,z)\in \Omega \} $
  where $u$ is a $C^2$-function defined in $\Omega $ (here we are using the natural Lie
  group multiplication $*$ of Nil$_3$, see Section~\ref{sechorizgraphNil}).
  Recall that a usual {\it vertical} graph over a domain $\Omega $ of the
  $(x,y)$-plane in Nil$_3$ is the set $\{ (x,y,0)*(0,0,u(x,y))\mid
  (x,y)\in \Omega \} $, where $u(x,y)$ is a $C^2$-function defined on $\Omega$;
  a direct comparison justifies the use of the
  word {\it horizontal} for the graphs in our study.
  In this setting, we prove that a parabolic {\it complete} horizontal minimal graph
  $M$ in Nil$_3$ must be a vertical plane (see Theorem~\ref{thm4.1} for a more general version
  of this result for horizontal multigraphs). We remark that every {\it entire} horizontal graph
  (i.e. $\Omega $ equals the $(y,z)$-plane) is proper, hence complete.
\end{enumerate}
The authors would like to thank J. M. Espinar, L. Mazet, W. H. Meeks and A. Ros for some helpful conversations.

\section{The operator $\Delta+q$.}
\label{secabstract}
Along this section, $(M,ds^2)$ will denote a connected (not necessarily
complete) Riemannian $n$-manifold without boundary. Consider the
operator in $M$ given by
\[
L=\Delta+q,
\]
where $\Delta$ stands for the Laplacian with respect to the metric
$ds^2$ acting on functions and $q\in C^\infty(M)$.

We next state the main result of this section.
\begin{theorem}
\label{thm1}
Let $(M^n,ds^2)$ be a Riemannian parabolic manifold.
Consider an operator ${L=\Delta +q}$, where $q\in C^{\infty }(M)$. Let
$u,v\in C^{\infty }(M)$ such that $u$ is bounded, $v>0$ and $uvLu\geq u^2Lv$
on~$M$. Then, $u/v$ is constant.
\end{theorem}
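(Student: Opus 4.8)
The plan is to conjugate the problem by $v$ and reduce it to a statement about a weighted Laplacian, after which a Caccioppoli-type estimate combined with the capacity characterization of parabolicity finishes the argument. Set $w=u/v$, which is well defined and smooth since $v>0$. A direct computation shows that the zeroth-order term cancels: indeed $uvLu-u^2Lv = u(v\Delta u - u\Delta v)$, and using $v^2\nabla w = v\nabla u - u\nabla v$ one obtains the pointwise identity
\[
v\Delta u - u\Delta v = \Div\!\left(v^2\nabla w\right).
\]
Hence the hypothesis $uvLu\ge u^2Lv$ is equivalent to $u\,\Div(v^2\nabla w)\ge0$ on $M$, and since $u=wv$ with $v>0$ this reads $w\,\Div(v^2\nabla w)\ge0$; that is, $w$ is subharmonic for the weighted operator $\Div(v^2\nabla\,\cdot\,)$.

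Next I would run a cutoff estimate. Let $\phi\in C^\infty_0(M)$ with $0\le\phi\le1$. Multiplying $w\,\Div(v^2\nabla w)\ge0$ by $\phi^2\ge0$, integrating over $M$ and integrating by parts (legitimate since $\phi$ has compact support and everything is smooth) yields
\[
\int_M\phi^2 v^2|\nabla w|^2\,dV \le -2\int_M \phi\, w\, v^2\,\nabla\phi\cdot\nabla w\,dV.
\]
Applying Cauchy--Schwarz to the right-hand side and writing $A=\int_M\phi^2v^2|\nabla w|^2\,dV$ and $B=\int_M w^2v^2|\nabla\phi|^2\,dV$ (both finite because $\phi$ is compactly supported), this becomes $A\le 2\sqrt{A}\sqrt{B}$, whence $A\le 4B$. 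The crucial point is that the weight is exactly the right one: since $w^2v^2=u^2$, we have $B=\int_M u^2|\nabla\phi|^2\,dV\le \|u\|_\infty^2\int_M|\nabla\phi|^2\,dV$, and this is precisely where the boundedness of $u$ enters.

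Finally I would invoke parabolicity. Recall that $M$ being parabolic is equivalent to every compact set having zero capacity, i.e.\ for each compact $K\subset M$ and each $\ve>0$ there is $\phi\in C^\infty_0(M)$ with $\phi\equiv1$ on $K$, $0\le\phi\le1$ and $\int_M|\nabla\phi|^2\,dV<\ve$. Taking an exhaustion $K_1\subset K_2\subset\cdots$ of $M$ and cutoffs $\phi_j$ with $\int_M|\nabla\phi_j|^2\,dV\to0$, the estimate gives $\int_{K_j}v^2|\nabla w|^2\,dV\le A_j\le 4\|u\|_\infty^2\int_M|\nabla\phi_j|^2\,dV\to0$; letting $j\to\infty$ forces $\int_M v^2|\nabla w|^2\,dV=0$, so $\nabla w\equiv0$, and since $M$ is connected and $v>0$, $w=u/v$ is constant. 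The main obstacle, and the place where the hypotheses are used most delicately, is exactly the matching between the $v^2$-weight produced by the conjugation and the factor $u^2=w^2v^2$ appearing in the error term: it is this that lets the \emph{unweighted} capacity of $M$ together with the mere boundedness of $u$---rather than any control on $v$ or on $w$ separately---annihilate the error term.
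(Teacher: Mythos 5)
Your proof is correct and follows essentially the same route as the paper's: the key identity $\mbox{div}\left(v^2\nabla(u/v)\right)=v\Delta u-u\Delta v$ (which makes the zeroth-order term cancel), the cutoff sequence supplied by the capacity characterization of parabolicity, and the Cauchy--Schwarz absorption in which the error term carries exactly the weight $w^2v^2=u^2$ controlled by the boundedness of $u$. The only cosmetic difference is that the paper first localizes the error integral to $\supp(\varphi_j)\setminus\varphi_j^{-1}(1)$ before absorbing, whereas your direct $A\le 2\sqrt{A}\sqrt{B}\Rightarrow A\le 4B$ step reaches the same conclusion more briefly.
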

Before proving Theorem~\ref{thm1}, we will make some comments.

\begin{enumerate}
\item Since $L=\Delta +q$, the hypothesis $uvLu\geq u^2Lv$ is obviously equivalent to
 $uv\Delta u\geq u^2\Delta v$. In other words, we can restrict to the particular case
 $q=0$.
\item We have stated the theorem under the hypothesis $(M,ds^2)$ is parabolic, although
the equivalent condition (see e.g. Proposition~4.1 in~\cite{Troy1}) used in its proof will be
\begin{description}
\item[$(\star )$]
There exists
a sequence of cut-off functions $\{ \varphi _j\} _j\subset C^{\infty }_0(M)$ such that
$0\leq \varphi _j\leq 1$ in $M$, the compact sets $\varphi _j^{-1}(1)$ form an
increasing exhaustion of $M$, and the sequence of energies $\{ \int _M|\nabla \varphi _j|^2\} _j$
tends to zero as $j\to \infty $.
\end{description}
\end{enumerate}
\begin{proof}
 The argument is inspired in ideas of Berestycki, Caffarelli and
  Nirenberg~\cite[Section 4]{BCN} (also
  see Ambrosio and Cabr\'e~\cite[Proposition~2.1]{amca1}). As said
  before this proof, we can assume $q=0$. First note that $u/v$
  is smooth on $M$ and its gradient is $\nabla (u/v)=v^{-2}(v\nabla
  u-u\nabla v)$. Hence, the divergence of the smooth vector field
  $v^2\nabla (u/v)$ is div$\left( v^2\nabla (u/v)\right) = v\Delta
  u-u\Delta v$, from where we obtain
\begin{equation}
\label{eq:thm1A}
u\, \mbox{div}\left( v^2\nabla (u/v)\right) =uvLu-u^2Lv\geq 0.
\end{equation}
Consider a sequence $\{ \varphi _j\} _j\subset C^{\infty }_0(M)$
satisfying the property $(\star )$. Given $j\in \N$, the standard
argument in the sense of distributions applied to the function
$\varphi _j^2\, u/v\in H_0^1(M)$ (as usual, $H_0^1(M)$ denotes the
closure with respect to the standard Sobolev norm of the linear space
$C_0^{\infty }(M)$ of compactly supported smooth functions on $M$)
 and to the smooth vector field
$v^2\nabla (u/v)$ yields
\[
\begin{array}{rcl}
  0&=&
  {\displaystyle \int _M
    \langle \nabla (\varphi _j^2u/v) ,v^2\nabla
    (u/v)\rangle
    +\int _M\frac{\varphi _j^2u}{v}\mbox{ div}\left (v^2\nabla
      (u/v)\right) }\\
  &=&
  {\displaystyle 2\int _M\varphi _juv \langle \nabla \varphi _j
    ,\nabla (u/v) \rangle
    +\int _M\varphi_j^2v^2\left| \nabla
      (u/v)\right| ^2
    +\int _M\frac{\varphi _j^2u}{v}\mbox{ div}\left (v^2\nabla
      (u/v)\right) }\\
\end{array}
\]
where $\langle \cdot ,\cdot \rangle $ stands for the Riemannian metric
on $M$. We define $\Omega _j=\varphi _j^{-1}(1)$.
The last equality together with (\ref{eq:thm1A}) lead to
\begin{eqnarray}
  {\displaystyle \int _M\varphi _j^2v^2|\nabla
    (u/v)|^2}&\leq &{\displaystyle -2\int _M\varphi _juv \langle \nabla
    \varphi _j,\nabla (u/v) \rangle }\nonumber
  \\
  &=&{\displaystyle -2\int _{\supp (\varphi _j)-\Omega _j}\varphi _juv \langle \nabla
    \varphi _j,\nabla (u/v) \rangle }\nonumber
  \\
  &\leq &{\displaystyle 2\left| \int _{\supp (\varphi _j)-\Omega _j}
  \varphi _juv \langle
      \nabla \varphi _j,\nabla (u/v) \rangle \right| }\nonumber
  \\
  &\leq & {\displaystyle 2\left( \int _{\supp (\varphi _j)-\Omega _j}\varphi _j^2v^2
      |\nabla (u/v)|^2\right) ^{1/2} \left( \int
      _{\supp (\varphi _j)-\Omega _j}u^2|\nabla \varphi _j|^2\right) ^{1/2} },
\label{eq:thm1B}
\end{eqnarray}
where we have used the Schwarz inequality. Since $u$ is bounded and
the functions $\varphi _j$ satisfy $(\star )$, we have
\begin{equation}
\label{eq:thm1Ba}
\int _{\supp (\varphi _j)-\Omega _j}u^2|\nabla \varphi _j|^2\leq C\, E(\varphi _j)
\end{equation}
where $C>0$ is independent of $j$
and the energy $E(\varphi _j)=\int _M|\nabla \varphi _j|^2$ tends to zero as $j\to \infty $.
Plugging this
information in (\ref{eq:thm1B}) and splitting its
left-hand-side in two integrals over $\Omega _j$ and
$\supp (\varphi _j)-\Omega _j$, we have
\begin{equation}
\label{eq:thm1C}
\int _{\Omega _j}v^2|\nabla (u/v)|^2+
\int _{\supp (\varphi _j)-\Omega _j}\varphi _j^2v^2|\nabla (u/v)|^2
\leq
2\sqrt{C\, E(\varphi _j)}\left( \int _{\supp (\varphi _j)-\Omega _j}\varphi _j^2v^2 |\nabla (u/v)|^2\right) ^{1/2}.
\end{equation}
Estimating by below the first summand in the last left-hand-side by
zero and simplifying we have
\begin{equation}
\label{eq:thm1D}
\int _{\supp (\varphi _j)-\Omega _j}\varphi _j^2v^2|\nabla (u/v)|^2
\leq 4C\ E(\varphi _j).
\end{equation}
Estimating by below the second summand in (\ref{eq:thm1C}) by zero and
using (\ref{eq:thm1D}),
\[
\int _{\Omega _j}v^2|\nabla (u/v)|^2
\leq
2\sqrt{C\, E(\varphi _j)}\left( \int _{\supp (\varphi _j)-\Omega _j}\varphi _j^2v^2 |\nabla (u/v)|^2\right) ^{1/2}\leq 4C\, E(\varphi _j),
\]
for all $j\in \N$. Taking $j\to \infty $ we infer that
$\int _{M}v^2|\nabla (u/v)|^2=0$.
Since $v>0$ in $M$, we conclude that $u/v$ is constant, as desired.
\end{proof}

We continue with an operator $L=\Delta +q$ on $(M^n,ds^2)$, $q\in
C^{\infty }(M)$. As usual, we associate to $L$ the quadratic form
\begin{equation}
\label{eq:indexform}
{\mathcal Q}(f,f)=- \int_M f\, Lf= \int_M\left(|\nabla f|^2-qf^2\right)
,\quad f\in C_0^\infty(M),
\end{equation}
which can be continuously extended to the Sobolev space $H_0^1(M)$.
We say that $-L$ is {\it nonnegative} on $M$ if ${\mathcal Q}(f,f)\geq
0$ for all $f\in C_0^{\infty }(M)$, and we write $-L\geqslant 0$ in
this case. Functions in the nullity of ${\mathcal Q}$ are called {\it
Jacobi
  functions.} By standard elliptic theory, Jacobi functions are always
smooth and lie in the kernel of $L$.  Fischer-Colbrie~\cite{fi1}
proved the following useful characterization of nonnegativity of $-L$
in terms of existence of positive Jacobi functions (see also Lemma 2.1
in Meeks, P\'erez and Ros~\cite{mpr19}); note that both~\cite{fi1} and~\cite{mpr19} are
stated and proved for surfaces, but the proof can be directly extended
to the $n$-dimensional case):

\begin{lemma}
  \label{lemaFC}
  In the above situation, the following statements are equivalent:
  \begin{enumerate}
  \item $-L\geqslant    0$ on $M$.
  \item There exists a positive Jacobi function on $M$.
  \item There exists a positive function $u\in C^\infty(M)$ such
    that $Lu\leq 0$.
  \end{enumerate}
\end{lemma}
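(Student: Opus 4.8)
The plan is to establish the cyclic chain of implications $(2)\Rightarrow(3)\Rightarrow(1)\Rightarrow(2)$. The first implication is immediate: a positive Jacobi function $u$ lies in the kernel of $L$, so $Lu=0\leq 0$ and $u$ already witnesses $(3)$.

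For $(3)\Rightarrow(1)$ I would use the classical ground-state substitution. Given $u>0$ with $Lu=\Delta u+qu\leq 0$, we have the pointwise bound $-q\geq \Delta u/u$. For an arbitrary $f\in C_0^\infty(M)$ set $g=f/u$, so that $f=ug$ and $\nabla f=g\nabla u+u\nabla g$. Multiplying the pointwise bound by $f^2\geq 0$ and integrating gives $\int_M(-q f^2)\geq \int_M ug^2\Delta u$, hence $\mathcal{Q}(f,f)=\int_M|\nabla f|^2-qf^2\geq \int_M|\nabla f|^2+\int_M ug^2\Delta u$. Integrating the last term by parts (legitimate since $g$ has compact support) produces $-\int_M g^2|\nabla u|^2-2\int_M ug\langle\nabla g,\nabla u\rangle$, and expanding $|\nabla f|^2=g^2|\nabla u|^2+2ug\langle\nabla u,\nabla g\rangle+u^2|\nabla g|^2$ makes the two offending terms cancel. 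One is left with $\mathcal{Q}(f,f)\geq \int_M u^2|\nabla(f/u)|^2\geq 0$, which is exactly $-L\geqslant 0$. This step is a short integration by parts and should present no difficulty.

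The hard part will be $(1)\Rightarrow(2)$, the actual construction of a positive Jacobi function. I would fix a base point $p_0\in M$ and an exhaustion $\Omega_1\Subset\Omega_2\Subset\cdots$ of $M$ by relatively compact smooth domains. On each $\Omega_k$ consider the first Dirichlet eigenvalue $\lambda_1(\Omega_k)$ of $-L$. The hypothesis $-L\geqslant 0$ forces $\lambda_1(\Omega_k)\geq 0$, and the strict monotonicity of $\lambda_1$ under domain inclusion yields $\lambda_1(\Omega_k)>\lambda_1(\Omega_{k+1})\geq 0$, so in fact $\lambda_1(\Omega_k)>0$ for every $k$. Consequently $0$ is not a Dirichlet eigenvalue on $\Omega_k$, the problem $Lu_k=0$ in $\Omega_k$ with $u_k=1$ on $\partial\Omega_k$ has a unique solution, and positivity $u_k>0$ follows from the maximum principle together with $\lambda_1(\Omega_k)>0$. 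After normalizing $u_k(p_0)=1$, the Harnack inequality bounds $u_k$ locally uniformly away from $0$ and $\infty$ on each fixed compact set for $k$ large, and interior Schauder estimates give local $C^{2,\alpha}$ bounds; a diagonal argument then extracts a subsequence converging in $C^2_{\mathrm{loc}}$ to a function $u>0$ with $Lu=0$ and $u(p_0)=1$, the desired positive Jacobi function.

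The main obstacle is precisely this compactness step: one must guarantee that the normalized solutions $u_k$ neither blow up nor collapse to $0$ on compact subsets, which is what the Harnack inequality provides once positivity is secured, and one must carefully justify the strict positivity $\lambda_1(\Omega_k)>0$ on proper subdomains so that the Dirichlet solutions exist and are positive. Since every ingredient is standard elliptic theory that is insensitive to the dimension, the surface arguments in~\cite{fi1} and~\cite{mpr19} transfer verbatim to the $n$-dimensional setting asserted in the statement.
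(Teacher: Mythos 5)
Your argument is correct and is precisely the standard Fischer--Colbrie--Schoen proof: the ground-state substitution $f=ug$ for $(3)\Rightarrow(1)$ and the exhaustion-plus-Harnack compactness construction for $(1)\Rightarrow(2)$. The paper itself gives no proof of this lemma, citing instead \cite{fi1} and Lemma~2.1 of \cite{mpr19} (with the remark that the surface proof extends verbatim to dimension $n$), and your write-up is exactly the argument contained in those references.
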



Suppose $(M^n,ds^2)$ is complete.  Given $p_0\in M$, we denote by
$d(\cdot ,p_0)$ the Riemannian distance on $M$ to $p_0$ and by
$B(p_0,r)=\{ p \in M \mid d(p,p_0)<r\} $ the metric ball in $M$ of
radius $r$ centered at $p_0$. We say that $M$ has {\it at most
  quadratic volume growth} when the function $r>0\mapsto
r^{-2}\mbox{Vol}(B(p_0,r))$ is bounded. This condition is
independent of the point $p_0\in M$. It is well-known that
if $M$ has at most quadratic volume growth, then it is parabolic
(Cheng-Yau~\cite{chya1}, see also Corollary~7.4 of~\cite{gri1}).
In Theorem~2.11 of~\cite{mpr19},
Meeks, P\'erez and Ros proved that if
$M$ is a simply-connected surface with quadratic area growth (although
only parabolicity is required in their argument) and
$-L\geqslant 0$, then a bounded
solution of $Lu=0$ does not change sign on $M$.  The following
corollary is a slight generalization of that result.

\begin{corollary}
 \label{cor:qag}
  Let $(M^n,ds^2)$ be a parabolic Riemannian manifold. Suppose
  $L=\Delta +q$, $q\in C^{\infty }(M)$, satisfies $-L\geqslant 0$ on
  $M$. If $u\in C^{\infty }(M)$ is a bounded function such that
  $uLu\geq 0$, then $Lu=0$ and either $u=0$  or $u$ never vanishes.
\end{corollary}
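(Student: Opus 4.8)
The plan is to reduce the statement to Theorem~\ref{thm1} by manufacturing a suitable comparison function $v$. Since $-L\geqslant 0$ on $M$, Lemma~\ref{lemaFC} guarantees the existence of a positive Jacobi function, that is, a function $v\in C^{\infty }(M)$ with $v>0$ and $Lv=0$ on $M$. This $v$ is exactly the kind of positive function required as the second argument of Theorem~\ref{thm1}, so the strategy is to pair it with the given $u$.

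Next I would verify that the pair $(u,v)$ satisfies the differential inequality in the hypothesis of Theorem~\ref{thm1}. Since $Lv=0$, the right-hand side $u^2Lv$ vanishes identically, so the inequality $uvLu\geq u^2Lv$ reduces to $uvLu\geq 0$. Because $v>0$ everywhere, this is in turn equivalent to $uLu\geq 0$, which is precisely the hypothesis placed on $u$. As $u$ is bounded by assumption, all the hypotheses of Theorem~\ref{thm1} are met, and I would conclude that $u/v$ is constant on $M$, say $u=cv$ for some $c\in \R$.

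Finally I would read off both assertions from the identity $u=cv$. Applying $L$ gives $Lu=cLv=0$, so $u$ is itself a Jacobi function, which establishes $Lu=0$. For the sign dichotomy: if $c=0$ then $u\equiv 0$; if $c\neq 0$ then, since $v>0$ throughout $M$, the function $u=cv$ has the constant sign of $c$ and hence never vanishes. This yields exactly the stated alternative.

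The only real content lies in the first step: recognizing that the nonnegativity of $-L$ should be exploited through Fischer-Colbrie's characterization in Lemma~\ref{lemaFC} to supply a positive solution $v$ of $Lv=0$, rather than attempting to argue with $u$ alone. Once such a $v$ is in hand, the corollary is a direct specialization of Theorem~\ref{thm1} to the case in which the comparison function itself lies in the kernel of $L$, and I do not anticipate any genuine obstacle beyond the elementary observation that the sign condition $uLu\geq 0$ is preserved after multiplication by the positive weight $v$.
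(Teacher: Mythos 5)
Your proposal is correct and coincides with the paper's own argument: both invoke Lemma~\ref{lemaFC} to produce a positive Jacobi function $v$, apply Theorem~\ref{thm1} to conclude $u=cv$, and then read off $Lu=0$ and the sign dichotomy. No issues.
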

\begin{proof}
  As $-L\geqslant 0$ on $M$, Lemma~\ref{lemaFC} insures that there
  exists $v\in C^{\infty }(M)$ such that $v>0$ and $Lv=0$ on $M$. By
  Theorem~\ref{thm1}, $u$ must be a multiple of $v$ from where the
  corollary follows directly.
\end{proof}

\medskip

Next we show a direct application of Corollary~\ref{cor:qag}.
Consider functions $f\in C^{\infty }(\R )$ and $u\in C^{\infty }(M)$
such that $Lu=0$, and define $v=f(u)\in C^{\infty }(M)$. Note that
\begin{equation}\label{eq:f(u)}
Lv=f''(u)|\nabla u|^2+q\left(f(u)-f'(u) u\right).
\end{equation}

\begin{lemma}\label{lem:qpos}
Suppose that the hypotheses of Corollary~\ref{cor:qag}
 hold.
\begin{enumerate}
\item If $q\geq 0$, then $q=0$. In particular, any Jacobi function bounded
  above or below on $M$ must be constant.
\item If $q\leq 0$, then any bounded Jacobi function $u$ on $M$ is
  constant, and either $u=0$ or $q=0$ in $M$.
\end{enumerate}
\end{lemma}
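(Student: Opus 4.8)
The plan is to derive both statements from Corollary~\ref{cor:qag} by feeding it a small number of cleverly chosen test functions built from the relation (\ref{eq:f(u)}), and then exploiting the sign of $q$ to force the nonnegative terms appearing in $Lv$ to vanish separately.

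For part (1), I would first apply Corollary~\ref{cor:qag} to the constant function $u\equiv 1$. Since $L(1)=\Delta 1+q=q\geq 0$, the hypothesis $uLu\geq 0$ holds trivially, so the corollary yields $L(1)=0$, that is $q=0$. Once $q=0$ is known, $L=\Delta$ and every Jacobi function is harmonic. To see that a harmonic function $u$ bounded below (resp.\ bounded above) is constant, I would set $v=e^{-u}$ (resp.\ $v=e^{u}$); this $v$ is positive and bounded, and formula (\ref{eq:f(u)}) with $q=0$ gives $Lv=e^{-u}|\nabla u|^2\geq 0$ (resp.\ $Lv=e^{u}|\nabla u|^2\geq 0$). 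Hence $vLv\geq 0$, and Corollary~\ref{cor:qag} forces $Lv=0$, so $|\nabla u|^2\equiv 0$ and $u$ is constant. (Equivalently, this last step is the standard fact that a one-signed harmonic function on a parabolic manifold is constant.)

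For part (2), let $u$ be a bounded Jacobi function, so that $Lu=0$. The key choice is $v=u^2$, i.e.\ $f(t)=t^2$ in (\ref{eq:f(u)}). Then $f'(u)=2u$ and $f''(u)=2$, and since $Lu=0$,
\[
Lv=2|\nabla u|^2+q\left(u^2-2u^2\right)=2|\nabla u|^2-qu^2\geq 0,
\]
because $q\leq 0$. As $v=u^2\geq 0$ is bounded, we have $vLv\geq 0$, and Corollary~\ref{cor:qag} gives $Lv=0$. Since the two summands $2|\nabla u|^2$ and $-qu^2$ are both nonnegative, each must vanish identically: $|\nabla u|^2\equiv 0$ shows that $u$ is constant, while $qu^2\equiv 0$ shows that either $u\equiv 0$ or $q\equiv 0$ on $M$.

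The computations are routine once these substitutions are in place; the only genuinely creative step—and the place where care is needed—is selecting the test functions ($u\equiv 1$, $e^{\pm u}$, and $u^2$) so that the resulting $Lv$ is a \emph{sum of nonnegative terms}, which is exactly where the sign hypotheses on $q$ enter. After that, all the analytic weight is carried by Corollary~\ref{cor:qag} (hence by Theorem~\ref{thm1} and parabolicity), so I do not expect any further obstacle.
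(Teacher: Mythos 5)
Your proof is correct, and part (2) coincides exactly with the paper's argument (the substitution $v=u^2$, the computation $Lv=2|\nabla u|^2-qu^2\geq 0$, and the conclusion via Corollary~\ref{cor:qag}). The only real divergence is in the first claim of part (1): the paper takes a positive Jacobi function $w$ supplied by Lemma~\ref{lemaFC}, sets $v=e^{-w}$, computes $Lv=v\left(|\nabla w|^2+q(1+w)\right)\geq 0$ from (\ref{eq:f(u)}), and then invokes Corollary~\ref{cor:qag} to get $|\nabla w|=0$ and $q=0$; you instead feed the constant function $u\equiv 1$ directly into Corollary~\ref{cor:qag}, noting $1\cdot L(1)=q\geq 0$, which immediately forces $q=L(1)=0$. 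Your choice is simpler and perfectly legitimate --- the corollary's hypotheses ($1$ is bounded and smooth, $uLu=q\geq 0$) are verified trivially, and you bypass the exponential substitution entirely; what the paper's route buys in exchange is the additional observation that the positive Jacobi function $w$ itself is constant, though this also follows a posteriori once $q=0$ is known. For the second sentence of part (1) you offer both the substitution $v=e^{\pm u}$ and the standard parabolicity fact for one-signed harmonic functions; the paper simply invokes the latter. No gaps.
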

\begin{proof}
We first prove 1. By Lemma~\ref{lemaFC}, there exists a
positive solution $w$ of $Lw=0$.  Thus $v=e^{-w}$ is smooth, positive and bounded on $M$.
  Furthermore, (\ref{eq:f(u)}) implies that it satisfies
  $Lv=v\left(|\nabla w|^2+q(1+w)\right)$. Since $q\geq 0$ on $M$ and
  $w$ is positive, then $Lv\geq 0$ on $M$.  Therefore,
  Corollary~\ref{cor:qag} implies $Lv=0$, and so
$|\nabla w|=0$  and $q=0$. The second sentence in item~1 follows directly from the
parabolicity of $M$, since $L=\Delta $.

To show 2, apply (\ref{eq:f(u)}) to the bounded smooth function $v=u^2$ we have $Lv=2|\nabla u|^2-qu^2\geq 0$.
  From Corollary~\ref{cor:qag} we get $Lv=0$, hence $|\nabla u|=0$
  and $q u^2=0$ and we are done.
\end{proof}

\section{Stable $H$-surfaces in 3-manifolds.}
\label{secstable} Let $M$ be a two-sided immersed surface of
constant mean curvature $H\in \R $ (briefly, an $H$-surface) in a
Riemannian 3-manifold~$N$.  From a variational viewpoint, $M$ is a
critical point of the functional Area$-2H\cdot $ Volume. More
precisely, denote by $x\colon M\to N^3$ an isometric immersion with
constant mean curvature $H$ and globally defined unit normal vector
field $\eta $. Given a function $f\in C^{\infty }_0(M)$ and a
compact smooth domain $\Omega \subset M$ which contains the support
of $f$, we consider any variation of $x$ given by a differentiable
map $X\colon (-\ve ,\ve )\times M\to N^3$, $\ve
>0$, such that $X(0,\cdot ) = x$ on $M$, $X(t,p) = x(p)$ for all
$(t,p)\in (-\ve ,\ve )\times [M-\Omega ]$, and $\left.
\frac{\partial X}{\partial t}\right| _0=f\eta $. We associate to $X$
the area function Area$(t) =$ Area$(X(t,\cdot ))$ (note that for
small $t$, the map $X(t,\cdot )\colon M\to N^3$ is an immersion) and
the signed volume function Vol$(t)= \int _{[0,t]\times \Omega }\Jac
(X)\, dV$. Then, the first derivative at $t=0$ of Area$(t)-2H
$Vol$(t)$ vanishes. It is also well-known that the second variation
formula of Area$-2H \cdot $Volume at $t=0$ is given by (see e.g.
Barbosa, Do Carmo and Eschenburg~\cite{bce1})
\begin{equation}
\label{QHsurf}
{\mathcal Q}(f,f) = \left. \frac{d^2}{dt^2}\right| _{t=0}[\mbox{Area}(t) -
2H \mbox{Vol}(t)] = -\int _MfLf\, dA= \int _M ( |\nabla f|^2-qf^2)\, dA,
\end{equation}
where $L=\Delta +q$ is a Schr\"{o}dinger type operator acting on
smooth functions called the {\it stability operator} of $M$, $\Delta $
stands for the Laplacian with respect to the induced metric on $M$ and
$q$ is the smooth function given by
\[
q=|A|^2+\mbox{Ric}(\eta )
\]
(here $A$ denotes the shape operator of $M$).  The surface $M$ is
said to be {\it stable} when $-L$ is a nonnegative operator.  We
also say that $u\in C^\infty(M)$ is a {\it Jacobi function} when
$Lu=0$.

Note that if $X$ is a Killing vector field on $N^3$, then
$\langle X,\eta \rangle $ is a Jacobi function on $M$. Since every stable
$H$-surface admits a positive Jacobi function by Lemma~\ref{lemaFC}, then
Theorem~\ref{thm1} has the following direct consequences:
\begin{itemize}
\item If $M$ is a parabolic stable $H$-surface and $N^3$ admits a nonzero
Killing vector field which is bounded when restricted to $M$, then
either $M$ is
invariant under the 1-parameter group of isometries generated by $X$,
or the linear space of bounded Jacobi functions and
the cone of positive Jacobi functions on $M$ are both generated
by the same function. From here we deduce that if $M$ is a
parabolic stable $H$-surface in a product manifold $\Sigma \times \R $,
then either $M$ entirely vertical or it is transverse to the unit
vertical Killing vector field $\frac{\partial }{\partial t}$.
\item If $M$ is a parabolic stable $H$-surface and $N^3$ admits
two linearly independent Killing vector fields which are bounded
when restricted to $M$, then $M$ is invariant under the 1-parameter group of
isometries generated by some linear combination of $X_1,X_2$.
\item If $M$ is a parabolic stable $H$-surface and $N^3$ admits
three linearly independent Killing vector fields which are bounded
when restricted to $M$, then $M$ has constant Gaussian curvature.
\end{itemize}

In the sequel, we will study the special case in which $N^3$ is a
simply-connected homogeneous 3-manifold whose isometry group has
dimension 4. These homogeneous spaces are classified in terms of two
real numbers $\kappa,\tau$ with $\kappa\neq 4\tau^2$, and are usually
denoted by $\E(\kappa,\tau)$. The space $\E(\kappa,\tau)$ admits a
fibration $\pi $ over the complete simply-connected surface
$\M^2(\kappa)$ of constant curvature $\kappa$ (the sphere
$\esf^2(\kappa)$ when $\kappa>0$, the Euclidean plane $\R^2$ when
$\kappa=0$ and the hyperbolic plane $\H^2(\kappa)$ when $\kappa<0$);
here $\tau$ is the bundle curvature.  The fibers of $\pi \colon \E
(\kappa ,\tau )\to \M ^2(\kappa )$ are geodesics, and translations
along these fibers generate a unit Killing vector field $E_3$, called the
{\it vertical vector field.}

If $\tau=0$, then $\E(\kappa,\tau)$ is the product space
$\M^2(\kappa)\times\R$.  In the case $\tau \neq 0$, we get three types
of manifolds depending on the sign of $\kappa $: if $\kappa >0$ we
have the {\it Berger spheres,} if $\kappa =0$ we obtain the {\it
  Heisenberg space} Nil$_3$, and the case $\kappa <0$ corresponds to
the universal covering of PSL$_2(\R )$, the unit tangent bundle of
$\H^2 $.  The above description could be
extended for $\kappa= 4\tau^2$, obtaining the Euclidean space $\R^3$
(when $\kappa=\tau=0$) or a round 3-sphere $\esf^3$ (when
$\kappa=4\tau^2\neq 0$), although these space forms have isometry
group of dimension~6 and we will not consider them.

A consequence of an intrinsic estimate of the distance to the boundary
of an immersed stable $H$-surface due to Rosenberg~\cite{rose4}  is
the nonexistence of
immersed, stable $H$-surfaces in $\E(\kappa,\tau)$ provided that
$H^2>\frac{\tau^2-\kappa} 3$, other than $\esf^2(\kappa)\times\{0\}$
in $\esf^2(\kappa)\times\R$.
It is expected that the right condition for such nonexistence result
is $H^2>\frac{-\kappa} 4$.  Our next result achieves such bound under
the additional assumption of parabolicity.  Also see
Corollary~\ref{th:mejora} below for a slight improvement of the
inequality $H^2>\frac{\tau ^2-\kappa }{3}$ when the hypothesis of
parabolicity is removed.

Given an immersed $H$-surface $M\looparrowright\E(\kappa,\tau)$ with
unit normal vector field $\eta $, the bounded Jacobi function
$\eta _3=\langle \eta ,E_3\rangle $ is called the {\it angle function} of $M$.
The surface $M$  is called a {\it vertical multigraph} if $\eta _3$ does
not vanish (i.e. it is transverse to the fibration $\pi
\colon \E (\kappa ,\tau )\to \M ^2 (\kappa )$), and is said to be
a {\it cylinder} over a curve $\gamma \subset \M ^2(\kappa )$ if $\eta _3=0$
(i.e. $M=\pi ^{-1}(\g )$).

\begin{theorem}
\label{th:localgraph}
  Let $M$ be an orientable, parabolic, complete, immersed stable $H$-surface
  in $\E(\kappa,\tau)$.  Then, one of the following statements hold:
  \begin{enumerate}
  \item $\E(\kappa,\tau)=\esf^2(\kappa)\times\R$, $H=0$ and $M$ is a slice
    $\esf^2(\kappa)\times\{t\}$ for some $t\in\R$.
    \item $H^2\leq \frac{-\kappa} 4$ and $M$ is either a vertical
    multigraph or a vertical cylinder over a complete curve of
    geodesic curvature $2H$ in $\M^2(\kappa)$.
  \end{enumerate}
\end{theorem}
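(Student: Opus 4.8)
The plan is to extract everything from the \emph{angle function} $\eta_3=\langle\eta,E_3\rangle$, where $E_3$ is the unit vertical Killing field of $\E(\kappa,\tau)$. Since $E_3$ is Killing, $\eta_3$ is a Jacobi function, and it is bounded because $|\eta_3|\le1$. As $M$ is stable we have $-L\geqslant0$, and as $M$ is parabolic Corollary~\ref{cor:qag} applies to $u=\eta_3$: either $\eta_3\equiv0$ or $\eta_3$ never vanishes. In the first case $E_3$ is everywhere tangent to $M$, so $M=\pi^{-1}(\g)$ is a vertical cylinder over a curve $\g\subset\M^2(\kappa)$; in the second $M$ is transverse to the fibers, i.e. a vertical multigraph. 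This already produces the dichotomy of conclusion~(2), and it remains to establish the bound $H^2\le-\kappa/4$ in each case and to isolate the slice situation, which can only occur in the product $\esf^2(\kappa)\times\R$.

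I would treat the cylinder case first, since it is the cleaner one. On $M=\pi^{-1}(\g)$ the fibers are geodesics of $\E(\kappa,\tau)$ tangent to $M$, and a short computation shows that $E_3$ is parallel on $M$; hence $M$ is intrinsically flat ($K\equiv0$) and, from the Gauss equation together with $\Ric(\eta)=\kappa-2\tau^2$, its potential is the \emph{constant} $q=|A|^2+\Ric(\eta)=4H^2+\kappa$. The trace of the shape operator yields that $\g$ has constant geodesic curvature $2H$, and $\g$ is complete because $M$ is. Testing the stability inequality $\int_M q\,\varphi^2\le\int_M|\nabla\varphi|^2$ with the cut-offs $\varphi=\varphi_j$ of property~$(\star)$ --- whose energies tend to $0$ while $\int_M\varphi_j^2\to\infty$ on the noncompact $M$ --- forces the constant $q$ to be $\le0$, i.e. $4H^2+\kappa\le0$, which is exactly $H^2\le-\kappa/4$ and, equivalently, $2H\le\sqrt{-\kappa}$ (the critical geodesic curvature in $\M^2(\kappa)$).

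The multigraph case is the heart of the proof. Here $\eta_3$, after fixing its sign, is a \emph{positive} bounded Jacobi function, so it is the natural candidate for the function $v$ of Theorem~\ref{thm1}. The plan is to insert the intrinsic geometry into $q$ by means of the Gauss equation of $\E(\kappa,\tau)$, which together with $\Ric(\eta)=\kappa-2\tau^2+(4\tau^2-\kappa)\eta_3^2$ and $|A|^2=4H^2-2\det A$ gives the convenient expression
\[
q=4H^2+\kappa-2K+(\kappa-4\tau^2)\eta_3^2,
\]
$K$ being the intrinsic curvature of $M$, and then to combine the Jacobi equation $\Delta\eta_3=-q\,\eta_3$ with parabolicity. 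Concretely, integrating $\Delta\eta_3=-q\,\eta_3$ against the cut-offs $\varphi_j^2$ (equivalently, integrating the pointwise identity $\Delta\log\eta_3=-q-|\nabla\log\eta_3|^2$) and letting $j\to\infty$ should annihilate the boundary contributions through $E(\varphi_j)\to0$ and leave an identity balancing $4H^2+\kappa$ against the integrals of $K$ and of $(\kappa-4\tau^2)\eta_3^2$; since $\kappa-4\tau^2<0$ in the cases with $\kappa\le0$, this last term has a favorable sign, and the desired conclusion $4H^2+\kappa\le0$ would follow once the Gauss-curvature term is tamed. This control of $\int_M K\varphi_j^2$ by a Gauss--Bonnet estimate tuned to the parabolicity of $M$ is precisely the step I expect to be the main obstacle: the naive bound $|A|^2\ge2H^2$ only yields the far weaker range $H^2\le-\kappa/2$ (and merely reproduces Rosenberg-type inequalities), so the sharp constant $-\kappa/4$ must emerge from the simultaneous use of the curvature term, the sign of $(\kappa-4\tau^2)\eta_3^2$, and the degeneration of $\eta_3$ (where $\log\eta_3$ is unbounded) absorbed by the parabolic cut-offs. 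The borderline $H^2=-\kappa/4$ then feeds into the rigidity stated in Corollary~\ref{th:planes}.

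Finally, the slice case. When $\kappa>0$ the inequality $H^2\le-\kappa/4$ is impossible, so stability must exclude both alternatives of the dichotomy unless $M$ is a slice. A vertical cylinder over a complete curve in $\esf^2(\kappa)$ is ruled out by the cylinder computation above, since it would force $4H^2+\kappa\le0$; and a complete multigraph is, via the covering $\pi|_M\colon M\to\esf^2(\kappa)$ onto the compact simply-connected base, an entire graph $u\colon\esf^2(\kappa)\to\R$, hence compact. Integrating the divergence form of the constant-mean-curvature equation over the closed base gives $H=0$, after which the maximum principle (or an integration by parts against $u$) forces $u$ to be constant, so $M$ is the totally geodesic slice $\esf^2(\kappa)\times\{t\}$, which is conclusion~(1). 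In the remaining $\kappa>0$, $\tau\neq0$ (Berger) situation the ambient space is compact, so $M$ is compact and the same stability argument (now with $\varphi\equiv1$) rules it out entirely, consistently with the theorem being vacuous there.
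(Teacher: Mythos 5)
Your overall skeleton --- the dichotomy for $\eta _3$ via Corollary~\ref{cor:qag}, the cylinder computation forcing $4H^2+\kappa\le 0$, and the isolation of the slice case --- matches the paper, and your treatment of the cylinder case is a correct variant of Proposition~\ref{prop:cyl} (you use the parabolic cut-offs where the paper computes the bottom of the spectrum of the flat cylinder; both work). But the multigraph case, which is the heart of the theorem, is not proved: you propose to integrate $\Delta\log\eta _3=-q-|\nabla\log\eta _3|^2$ against the cut-offs and to control $\int_M K\varphi_j^2$ by some Gauss--Bonnet estimate, and you yourself flag this control as ``the main obstacle''. That obstacle is real --- $\log\eta _3$ is unbounded precisely where the geometry degenerates, so the parabolic cut-offs do not obviously kill the boundary terms, and no Gauss--Bonnet input is available on a general parabolic surface --- and the paper does not argue this way at all. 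Instead it assumes $H^2>\frac{-\kappa}{4}$ and runs a compactness argument: either $\eta _3\ge c>0$, in which case $\pi|_M$ is a covering of the simply connected base, $M$ is an entire graph, and the maximum principle against the CMC spheres $S_H$ (which exist exactly when $H^2>\frac{-\kappa}{4}$) forces item~1; or there are points $p_n$ with $\eta _3(p_n)\to 0$, and after translating $p_n$ to a fixed origin, Schoen-type curvature estimates for stable surfaces give a convergent subsequence whose limit has angle function $\ge 0$ vanishing at a point, hence identically zero by the maximum principle for the Jacobi equation; analytic continuation then produces a complete stable vertical cylinder with $H^2>\frac{-\kappa}{4}$, contradicting the cylinder instability already established. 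Without some version of this (or a genuinely new integral estimate), your proof of $H^2\le\frac{-\kappa}{4}$ for multigraphs is missing.

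A secondary gap: in the slice case you treat a complete multigraph over $\esf^2(\kappa)$ as automatically an entire graph. Transversality to the fibers only makes $\pi|_M$ a local diffeomorphism; to promote it to a covering map (hence, over a simply connected base, a global graph) one needs $\eta _3$ bounded away from zero, which is exactly the sub-case split the paper makes. Likewise, for Berger spheres the test function $\varphi\equiv 1$ does not ``rule it out entirely'' by itself, since $q=|A|^2+\Ric(\eta)$ need not have a sign for general $(\kappa,\tau)$ with $\kappa>0$; the paper instead invokes the classification of compact stable $H$-surfaces in Berger spheres from~\cite{mpr19}.
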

\begin{proof}
  Since $\eta _3$ is a bounded Jacobi function and
  $M$ is parabolic, then Corollary~\ref{cor:qag} assures that either $\eta _3$
  is identically zero or it never vanishes. In the first case, $M$ is
  a vertical cylinder over a complete curve of geodesic curvature $2H$
  by Lemma~\ref{lem:cylinders} in the appendix. Since $M$ is stable,
  we conclude from Proposition~\ref{prop:cyl} in the same appendix
  that $\kappa\leq -4H^2$, which finishes this case.

  Assume from now on that $\eta _3$ has no zeros on $M$.  Then $M$ is a
  vertical multigraph and we can suppose $\eta _3>0$. Assume that
  $H^2>\frac{-\kappa}{4}$ and we will prove that item 1 holds.

If there exists a constant $c>0$ such that $\eta _3\geq c$, then the
  vertical projection $\pi $ restricts to $M$ as a covering map
over the simply-connected surface $\M^2(\kappa )$. Hence $M$ is an entire vertical
  graph (i.e. $M$ intersects exactly once every fiber of $\pi $ or
  equivalently, $M$ is the image of a global section of the Riemmanian
  fibration $\pi \colon \E (\kappa ,\tau )\to \M ^2(\kappa )$). Now
  assume $\E (\kappa ,\tau )$ is not a Berger sphere and
  consider a sphere $S_H\subset \E (\kappa ,\tau )$ of constant mean
  curvature $H$, which exists since $\frac{-\kappa} 4<H^2$ (see
  Remark~\ref{rem:spheres} below).  By applying the maximum principle
  to $M$ and $S_H$ we conclude that $M=S_H$, which can be easily seen
  to be possible only if item 1 of the theorem holds. In the case
  $\E (\kappa ,\tau )$ is a Berger sphere the above argument might
  fail, since we need to start with a sphere $S_H$ {\it disjoint} from
  $M$, which does not necessarily exist. In this case, we simply
  observe that $M$ is compact since it is a global graph over
  $\esf^2(\kappa )$, which contradicts item~1 of Corollary~9.6
  in~\cite{mpr19}.

  Therefore, there exists a sequence of points $\{p_n\}_n$ in $M$ such
  that $\eta _3(p_n)\to 0$ as $n\to +\infty$. Consider the isometry
  $\phi_n$ of $\E(\kappa,\tau)$ obtained as composition of the
  vertical translation $T_n$ which maps $p_n$ to $\pi (p_n)$ (here we
  identify $\pi (p_n)$ with its related point in $\E (\kappa ,\tau )$
  at height zero with respect to the fibration $\pi $) with the
  horizontal translation which maps $T_n(p_n)$ to a previously chosen
  point ${\bf 0}$ of $\E(\kappa,\tau)$, to which we will call the
  origin (here, the word ``horizontal'' refers to the legendrian lift
  of the isometry of $\M^2(\kappa )$ through $\pi $, a lift which
  preserves the natural distribution orthogonal to the vertical
  fibers; for instance, in Section~\ref{sechorizgraphNil} below, this
  distribution is generated by the vector fields $E_1,E_2$).  Then
  $\phi_n(p_n)={\bf 0}$ and the surface $M_n=\phi_n(M)$ is a complete
  stable $H$-surface which contains ${\bf 0}$. Let $\eta _{n,3}$ be the
  angle function of $M_n$ (which is nothing but $\eta _3\circ \phi
  _n^{-1}$). Then $\eta _{n,3}>0$ on $M_n$ and $\eta _{n,3}({\bf 0})\to 0$ as
  $n\to+\infty$.

  Since $\{ M_n\} _n$ is a sequence of $H$-surfaces with uniformly
  bounded second fundamental form (this follows from curvature
  estimates for stable constant mean curvature surfaces, see
  Schoen~\cite{sc3}) and an accumulation point at the origin, standard
  convergence theorems (see for instance Theorem~4.2.2 in P\'erez and
  Ros~\cite{pro2} for the minimal case, which can be extended to the
  case of fixed constant mean curvature with minor changes) give that
  after extracting a subsequence, there exist neighborhoods $U_n$ of
  ${\bf 0}$ in $M_n$ which converge uniformly in the $C^k$-topology
  for every $k$ to a (not necessarily complete) $H$-surface $U_{\infty
  }$ which contains the origin ${\bf 0}$. Clearly, $U_{\infty }$ is
  stable and its angle function $\eta _{3,\infty }= \lim _n\eta _{3,n}|_{U_n}$
  satisfies $\eta _{3,\infty }\geq 0$ in $U_{\infty }$ and $\eta _{3,\infty
  }({\bf 0})=0$. Applying the maximum principle to $\eta _{3,\infty }$
  (see e.g. Assertion 2.2 in~\cite{mpr19}) we conclude that
  $\eta _{\infty,3}$ is identically zero in $U_{\infty }$.  Therefore,
  $U_\infty$ is contained in a vertical cylinder $C_{\g }$ over a
  curve $\g \subset \M ^2(\kappa )$ with constant geodesic curvature
  $2H$. Since the surfaces $M_n$ are complete without boundary and
  have uniformly bounded fundamental form, an analytic prolongation
  argument insures that the maximal sheet which contains $U_{\infty }$
  in the accumulation set of the $\{ M_n\} _n$ is the whole cylinder
  $C_{\g }$ (after extracting a subsequence). In particular $C_{\g }$
  is stable, which is a contradiction since as we said above, there
  are no stable cylinders of constant mean curvature $H$ with
  $H^2>\frac{-\kappa} 4$. Now the proof is complete.
\end{proof}

\begin{remark}\label{rem:spheres}
  {\rm In the above proof, we have used the existence of (immersed)
    $H$-spheres in $\E (\kappa ,\tau )$ for all values of $H$ with
    $H^2>\frac{-\kappa} 4$. We now give a brief explanation of this
    well-know result.  In the product space $\M^2(\widetilde
    \kappa)\times\R$, there are (embedded) rotational spheres of
    constant mean curvature $\widetilde H$ whenever $4\widetilde
    H^2+\widetilde\kappa>0$ (see Hsiang and Hsiang~\cite{hshs1},
    Pedrosa and Ritor\'e~\cite{pri1}, and Abresch and
    Rosenberg~\cite{AbRo1}).  Let $\widetilde\kappa=\kappa-
    4\tau^2$. By the Daniel correspondence~\cite{da1}, there are
    (possibly nonembedded, see Torralbo~\cite{tor1}) spheres of
    constant mean curvature $H$ in $\E(\kappa,\tau)$ whenever
    $H^2+\tau^2=\widetilde{H}^2>\frac{-\widetilde \kappa}
    4=\frac{-\kappa} 4+\tau^2$.  }
\end{remark}

Hauswirth, Rosenberg and Spruck~\cite[Theorem~1.2]{hars1} used a
half-space type theorem for properly embedded $\frac{1}{2}$-surfaces
in $\H ^2\times \R $ to conclude that a complete, immersed
$\frac{1}{2}$-surface in $\H^2\times \R $ which is transverse
to $E_3$ must be an entire vertical graph.
A bit later and using a different approach to prove a related
half-space type theorem for properly immersed minimal surfaces in the
Heisenberg space Nil$_3$, Hauswirth and Daniel~\cite[Theorem 3.1]{dh1}
demonstrated the corresponding result, namely that a complete immersed
 minimal
surface in Nil$_3$ which is transverse to $E_3$ must be an entire
vertical graph.
Recently, Fern\'andez and Mira (personal communication) have
extended these results to the case of a complete vertical multigraph
$M$ in $\E (\kappa ,\tau )$ with mean curvature $H$ satisfying
$H^2=\frac{-\kappa }{4}$, concluding that $M$ is an entire vertical
graph. As a direct consequence, we can improve the statement of
item~2 in Theorem~\ref{th:localgraph}:
\begin{corollary}
\label{th:planes}
{\it Under the same hypotheses of Theorem~\ref{th:localgraph}, if
$H^2=\frac{-\kappa} 4$ then $M$ is either an entire vertical graph
or a vertical cylinder over a complete curve of geodesic curvature
$2H$ in $\M^2(\kappa)$. }
\end{corollary}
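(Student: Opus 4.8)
The plan is to deduce this corollary directly from Theorem~\ref{th:localgraph} together with the Fern\'andez--Mira extension of the half-space type theorems quoted immediately above. Since $M$ satisfies exactly the hypotheses of Theorem~\ref{th:localgraph}, precisely one of its two conclusions holds. First I would observe that conclusion~(1) of that theorem cannot occur under our standing assumption: it forces the ambient space to be $\esf^2(\kappa)\times\R$, hence $\kappa>0$, whereas $H^2=\frac{-\kappa}{4}\geq 0$ requires $\kappa\leq 0$. Thus $M$ must fall into conclusion~(2), so $M$ is either a vertical cylinder over a complete curve of geodesic curvature $2H$ in $\M^2(\kappa)$, or a vertical multigraph.

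In the first subcase there is nothing further to prove, since a vertical cylinder over a complete curve of geodesic curvature $2H$ is already one of the two alternatives asserted by the corollary. All the remaining work is concentrated in the second subcase, where $M$ is a complete vertical multigraph with $H^2=\frac{-\kappa}{4}$. Here I would invoke the Fern\'andez--Mira result stated just before the corollary: any complete vertical multigraph in $\E(\kappa,\tau)$ whose mean curvature satisfies $H^2=\frac{-\kappa}{4}$ is in fact an entire vertical graph. Applying it upgrades ``multigraph'' to ``entire graph'', producing precisely the other alternative in the conclusion, and the proof closes.

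The main point to be careful about is the logical bookkeeping rather than any analytic difficulty: the genuinely hard input (a half-space type theorem forcing a multigraph to be a global graph at the critical value $H^2=\frac{-\kappa}{4}$) is supplied externally by Fern\'andez and Mira, and the geometric dichotomy between multigraph and cylinder is supplied by Theorem~\ref{th:localgraph}. I therefore expect no new obstacle; the only step demanding attention is confirming that the borderline equality $H^2=\frac{-\kappa}{4}$ excludes the spherical conclusion~(1) and lands us squarely in the regime $\kappa\leq 0$ where the Fern\'andez--Mira theorem applies.
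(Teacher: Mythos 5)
Your proposal is correct and follows exactly the route the paper intends: the corollary is stated as a direct consequence of Theorem~\ref{th:localgraph} (whose case~(1) is indeed incompatible with $H^2=\frac{-\kappa}{4}$) combined with the Fern\'andez--Mira result quoted just before it, which upgrades the vertical multigraph alternative to an entire vertical graph. The paper gives no further argument, so your write-up simply makes explicit the same bookkeeping.
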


\begin{remark}
  \label{remark3.4}
  {\rm Entire vertical minimal graphs in Nil$_3$ have been
    analytically classified by Fern\'andez and Mira~\cite{fm1} in
    terms of their conformal type ($\C $ or $\D =\{ z\in \C \mid
    |z|<1\} $) and a holomorphic quadratic differential (called
the {\it Abresch-Rosenberg quadratic differential} of the
graph~\cite{AbRo1}), whose only restriction is to be nonzero
when the conformal structure is $\C$.
    In spite of this analytic description, the geometry of the entire
    minimal graphs in Nil$_3$ is not well understood yet. All simple
    examples of such entire minimal graphs have area growth strictly
    greater than quadratic. We conjecture that no entire minimal graph
    in Nil$_3$ has quadratic area growth, which would imply that the
    only complete, immersed, stable $H$-surfaces in Nil$_3$ with
    quadratic area growth are vertical planes.

    Using the Daniel sister surface correspondence
    \cite[Corollary3.3]{da1} and the above paragraph, we conclude that
    there exist entire vertical graphs of constant mean curvature
    $\frac12$ in $\H^2\times \R $ which are parabolic.  It is natural to
    expect nonexistence of entire vertical graphs of constant mean
    curvature $\frac12$ and quadratic area growth in $\H^2\times\R$.
    Regarding the case $H\in [0,\frac{1}{2})$ in $\H ^2\times \R $,
    for each such a value of $H$ there exist complete stable
    $H$-surfaces of revolution in $\H^2\times \R $ which are entire
    graphs (see Nelli and Rosenberg~\cite{ner1}); even more, there
    exists an entire minimal graph which is conformally $\C $ (Collin
   and Rosenberg~\cite{cor2}). Note that the Daniel sister correspondence
relates these $H$-surfaces in $\H^2\times \R $, $0<H<\frac12$,
with $\widehat{H}$-surfaces in $\widetilde{\mbox{PSL}_2}(\R )=\E
(\kappa ,\tau )$ where $-1<\kappa <0$, $\tau ^2=\frac{1}{4}(\kappa
+1)$ and $\tau ^2+\widehat{H}^2=H^2<\frac{1}{4}$).

}
\end{remark}

We have already mentioned that if $M$ is an orientable, complete,
immersed, stable $H$-surface in $\E (\kappa ,\tau )$, then $H^2\leq
\frac{\tau ^2-\kappa }{3}$ (except the special case of
$M=\esf^2(\kappa )\times \{ t\} $ in $\esf^2(\kappa )\times \R $) by
a result due to Rosenberg~\cite{rose4}.  Next we will improve
slightly this inequality. We point out that
Corollary~\ref{th:mejora} below was stated in Remark~9.11
of~\cite{mpr19} in the particular case of $\E (\kappa ,\tau
)=\H^2\times \R $; although this particular case can be seen to
imply the whole statement by an application of the
Daniel sister correspondence, we will supply an independent proof.
We will need the following expression for the stability operator of
$M$ (see e.g.~\cite{rose4} or formula~(28) in~\cite{mpr19}):
\begin{equation}
\label{eq:q}
L=\Delta-K+\widetilde{q},\quad
\mbox{where }\widetilde{q}
=3 H^2+\kappa-\tau^2+(H^2-\mbox{det}(A)),
\end{equation}
where $K$ denotes the Gauss curvature of $M$.

Orientable, complete, immersed, stable $H$-surfaces $M$ in
$\E(\kappa,\tau)$, for $\kappa\geqslant \tau^2$, are classified
(this inequality leads to $\E (\kappa ,\tau )$ being
$\esf^2(\kappa )\times \R $ or a Berger sphere
with nonnegative scalar curvature; in the first case
$M$ must be a horizontal slice, while in the second case
there are no such complete stable $H$-surfaces for any value of $H$,
see Rosenberg~\cite{rose4} and Meeks, P\'erez and Ros~\cite{mpr19}).
Thus we can assume $\kappa<\tau^2$.

\begin{corollary}\label{th:mejora}
  Let $M$ be an orientable, complete, immersed, stable $H$-surface in
  $\E(\kappa,\tau)$, with $\kappa< \tau^2$. Then
there exists $\ve>0$ such that $H^2< \frac{\tau^2-\kappa} 3-\ve$.
\end{corollary}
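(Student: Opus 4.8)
The plan is to argue by contradiction and reduce the desired uniform gap to the nonexistence of stable $H$-surfaces exactly at the threshold $H^2=\frac{\tau^2-\kappa}{3}$. Suppose no such $\ve>0$ exists. Since Rosenberg's bound~\cite{rose4} already gives $H^2\leq \frac{\tau^2-\kappa}{3}$ for every orientable, complete, immersed stable $H$-surface in $\E(\kappa,\tau)$, failure of the statement produces a sequence $M_n$ of such surfaces with mean curvatures $H_n$ satisfying $H_n^2\nearrow \frac{\tau^2-\kappa}{3}$. By the curvature estimates for stable constant mean curvature surfaces (Schoen~\cite{sc3}) and the homogeneity of $\E(\kappa,\tau)$ (hence bounded geometry), the second fundamental forms $|A_{M_n}|$ are uniformly bounded. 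Fixing $p_n\in M_n$ and composing with ambient isometries sending $p_n$ to a fixed origin ${\bf 0}$, the convergence argument used in the proof of Theorem~\ref{th:localgraph} produces, after passing to a subsequence, a complete, orientable, stable $H_\infty$-surface $M_\infty$ through ${\bf 0}$ with $H_\infty^2=\frac{\tau^2-\kappa}{3}$ and $|A_{M_\infty}|$ bounded. Completeness of the limit is ensured because the uniform curvature bound saturates at the fixed ambient scale, and stability passes to the limit since the relevant geometric data converge in $C^k$.

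Next I would exploit that $M_\infty$ sits exactly at the threshold. Writing the stability operator as in~(\ref{eq:q}), namely $L=\Delta-K+\widetilde q$ with $\widetilde q=(3H_\infty^2+\kappa-\tau^2)+(H_\infty^2-\det A)$, the constant part $3H_\infty^2+\kappa-\tau^2$ vanishes, while $H_\infty^2-\det A=\frac14(k_1-k_2)^2\geq 0$. Hence the stability inequality $\int_{M_\infty}|\nabla f|^2\geq \int_{M_\infty}(\widetilde q-K)f^2$ yields $\int_{M_\infty}|\nabla f|^2+\int_{M_\infty}Kf^2\geq 0$ for all $f\in C_0^\infty(M_\infty)$, i.e. $-(\Delta-K)\geq 0$ on $M_\infty$. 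Since $M_\infty$ is stable, Lemma~\ref{lemaFC} provides a positive Jacobi function $u$ with $Lu=0$, and a direct computation shows that the conformal metric $\widetilde g=u^2\,ds^2$ has Gauss curvature
\[
\widetilde K=u^{-2}\bigl(\widetilde q+|\nabla \log u|^2\bigr)\geq 0 .
\]
Thus the conformal class of $M_\infty$ carries a metric of nonnegative curvature. I would then conclude that $M_\infty$ is parabolic: once $\widetilde g$ is known to be complete, it has at most quadratic area growth and hence is parabolic (Cheng--Yau), and parabolicity is a conformal invariant in dimension two, so $(M_\infty,ds^2)$ is parabolic as well.

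Granting parabolicity, the contradiction is immediate: $M_\infty$ is an orientable, parabolic, complete, immersed stable $H_\infty$-surface, so Theorem~\ref{th:localgraph} forces either $H_\infty=0$ with $M_\infty$ a slice, or $H_\infty^2\leq \frac{-\kappa}{4}$. But $\frac{\tau^2-\kappa}{3}>\frac{-\kappa}{4}$ precisely when $\kappa<4\tau^2$, which always holds here since $\kappa<\tau^2\leq 4\tau^2$; as $H_\infty^2=\frac{\tau^2-\kappa}{3}>\frac{-\kappa}{4}\geq 0$, both alternatives are impossible, and the resulting contradiction yields the uniform $\ve=\ve(\kappa,\tau)>0$. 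The step I expect to be the main obstacle is exactly the parabolicity of $M_\infty$: the conformal change only gives $\widetilde K\geq 0$, and a nonnegatively curved conformal metric does not by itself pin down the (parabolic) conformal type unless it is complete — a conformally hyperbolic limit is a priori not excluded. Controlling the conformal factor $u$ at infinity well enough to guarantee completeness of $\widetilde g$ is the delicate point, and is where the uniform geometry of $M_\infty$ (bounded $|A|$, Harnack estimates for $u$) together with the Fischer-Colbrie--Schoen structure theory for the operator $\Delta-K$ must be brought to bear.
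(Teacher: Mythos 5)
Your overall architecture matches the paper's: a compactness argument (Schoen's curvature estimates for stable surfaces, ambient homogeneity, $C^k$ convergence after normalizing by isometries) reduces the uniform gap $\ve>0$ to the nonexistence of a complete stable surface exactly at the threshold $H^2=\frac{\tau^2-\kappa}{3}$, and the threshold case is to be excluded by combining parabolicity with Theorem~\ref{th:localgraph}, whose conclusion $H^2\leq\frac{-\kappa}{4}$ is incompatible with $H^2=\frac{\tau^2-\kappa}{3}$ when $\kappa<\tau^2$. That final arithmetic is correct and identical to the paper's.

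However, the step you yourself flag as the main obstacle is a genuine gap, and it is the crux of the whole proof: you never establish that the threshold surface is parabolic. Your route --- conformally rescaling by the square of a positive Jacobi function to obtain $\widetilde K\geq 0$ --- does not close, because the conformal factor $u$ may decay at infinity, so $\widetilde g=u^2\,ds^2$ need not be complete; an incomplete metric of nonnegative curvature gives no information on the conformal type, and bounded geometry plus interior Harnack inequalities do not by themselves prevent $u$ from decaying fast enough to make $\widetilde g$ incomplete. The paper closes exactly this point by a different mechanism: at the threshold the potential in the decomposition $L=\Delta-K+\widetilde q$ of~(\ref{eq:q}) satisfies $\widetilde q=3H^2+\kappa-\tau^2+(H^2-\det A)=\frac14(k_1-k_2)^2\geq 0$, and Theorem~2.9 of~\cite{mpr19} states that a complete stable surface whose stability operator has this form with $\widetilde q\geq 0$ has \emph{quadratic area growth}, hence is parabolic by Cheng--Yau. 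Without importing that area-growth theorem (or supplying an independent proof of parabolicity), your argument is incomplete. A minor structural remark: the paper rules out the threshold for an arbitrary complete stable surface first and only then runs the limit argument, whereas you treat only the limit surface $M_\infty$; this ordering is immaterial, since $M_\infty$ is itself an arbitrary complete stable threshold surface, but it does not help with the missing parabolicity step.
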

\begin{proof}
We will first prove that the case $H^2=\frac{\tau ^2-\kappa }{3}$
cannot occur (recall that the case $H^2>\frac{\tau ^2-\kappa }{3}$
does not occur by~\cite{rose4}). Since
$H^2-\det(A)=\frac{1}{4}(k_1-k_2)^2\geq 0$ where
  $k_1,k_2$ are the principal curvatures of $M$ with respect to any
  unit normal vector field, then the function $\widetilde{q}$ in
  equation~(\ref{eq:q}) satisfies $\widetilde{q}\geq 0$.
  By~\cite[Theorem 2.9]{mpr19}), $M$ has quadratic area growth, in
  particular it is parabolic.
In this setting, Theorem~\ref{th:localgraph} implies that
$H^2\leq\frac{-\kappa} 4  $. Plugging the value of $H^2$
in this inequality we obtain $4\tau  ^2\leq \kappa $, which contradicts
our hypothesis. Therefore, $H^2<  \frac{\tau^2-\kappa }{3}$.

Next
 suppose that for any $n\in\N$, there exists an orientable, complete,
stable, $H_n$-surface $M_n$ in the same
  ambient space $\E (\kappa,\tau )$, with $H_n^2\in [0,
  \frac{\tau^2-\kappa} 3)$ converging to $H_{\infty
  }:=\frac{\tau^2-\kappa} 3$ as $n\to \infty $.  Similarly as in the
  proof of Theorem~\ref{th:localgraph}, we can assume after an ambient
  isometry that $M_n$ contains the origin ${\bf 0}\in \E (\kappa ,\tau
  )$. Since the sequence $\{ M_n\} _n$ has
  uniformly bounded second fundamental form (by stability), an
  accumulation point at the origin and their mean curvatures $H_n$
  satisfy $H_n\to H_{\infty }$, then after extracting a subsequence
  the $M_n$ converge to
to an orientable, complete, stable $H_{\infty }$-surface
$M_{\infty}\subset \E (\kappa ,\tau )$ with ${\bf 0}\in M_{\infty
}$. This contradicts the arguments in the previous paragraph, and
finishes the proof.
\end{proof}

\section{A Bernstein-type theorem for horizontal minimal graphs in {\rm Nil}$_3$.}
\label{sechorizgraphNil}
Consider the model of the Heisenberg space Nil$_3=\E (0,\frac{1}{2})$ given by $(\R^3,ds^2)$ with the
Riemannian metric
\begin{equation}
\label{eq:metricNil}
ds^2=dx^2+dy^2+\left( dz+\frac{1}{2}(y\, dx-x\, dy)\right) ^2.
\end{equation}
In this model, the Riemannian fibration is the vertical projection $\pi \colon \R^3\to \R^2$, $\pi
(x,y,z)=(x,y)$ and the vector fields
\begin{equation}
\label{eq:Ei}
E_1=\partial _x-\frac {y}{2}\partial _z,
\quad
E_2=\partial _y+\frac {x}{2}\partial _z,
\quad
E_3=\partial _z
\end{equation}
define a global orthonormal basis of left invariant vector fields.
$E_3$ is a (unit) Killing vector field of Nil$_3$, although this property
no longer holds for $E_1,E_2$.  Instead,
\[
X:=E_1+yE_3
\]
is Killing (not bounded), and its associated 1-parameter group of
isometries is given by
\[
\phi_t(x,y,z)=\left(x+t,y,z+\frac{ty} 2 \right),\quad t\in\R.
\]
Given a domain $\Omega\subset\R^2\equiv \{ (0,y,z)\mid
y,z\in \R \} $ and a $C^2$-function $u\colon \Omega\to \R$,
the {\it horizontal graph} $\Sigma_u$  defined by $u$
(in the direction of~$X$) is the surface of
Nil$_3$ parameterized by
\begin{equation}
  \label{eq:Fgrafo}
  F(y,z)=\left(u(y,z),y,z+\frac y 2 u(y,z)\right),\quad
  (y,z)\in\Omega,
\end{equation}
see Figure~\ref{fig1}.
\begin{figure}
\begin{center}
\includegraphics[height=5cm]{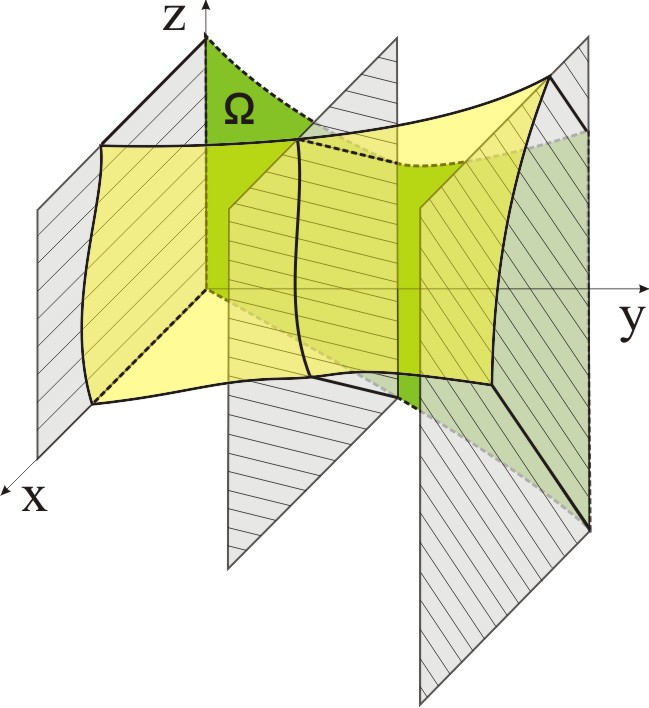}
\caption{A horizontal graph in the direction of $X$, over a domain
$\Omega $ of the $(y,z)$-plane. The parallel lines in vertical
planes are integral curves of $X$.} \label{fig1}
\end{center}
\end{figure}
Note that since the rotations around the $x_3$-axis are isometries
of Nil$_3$, this notion of horizontal graph does not really give
priority to the special horizontal direction defined by $X$.  A
straightforward computation gives that a horizontal graph $\Sigma_u$
is minimal if and only if $u$ satisfies the following PDE:
\[
\left[1+2y u_z+(1+y^2)u_z^2\right]u_{yy}
-2 u_y\left[y+(1+y^2)u_z\right]u_{yz}
\]
\[
+\left[1+(1+y^2)u_y^2\right]u_{zz}
-u_y u_z (1+yu_z)=0.
\]
A horizontal graph $\Sigma_u$ is said to be {\it entire} when
$\Omega=\R^2$, i.e. $\Sigma _u$ intersects exactly once every
integral curve of $X$. As simple examples of entire horizontal
minimal graphs, we have the following ones:
\begin{enumerate}
\item For any $a,b\in\R$ the vertical plane $\Pi_{a,b}=\{x=ay+b\}$ is
  the entire horizontal minimal graph $\Sigma _u$ associated to the
  function $u(y,z)=ay+b$. Moreover, the induced metric on $\Pi _{a,b}$
  is flat.
\item Given $c,d\in\R$, $c\neq 0$, the function $u(y,z)=cz+d$
  describes an entire horizontal minimal graph~$\Sigma_u$, which fails
  to be parabolic by Theorem~\ref{thm4.1} below.  $\Sigma _u$ can also
  be seen as an entire vertical minimal graph parameterized by
  \[
  \widetilde{F}(x,y)=\left(x,y,\frac{xy} 2+\frac{x-d}{c}\right),\quad
  (x,y)\in\R^2\equiv \{ (x,y,0)\mid x,y\in \R \} .
  \]
\end{enumerate}

If $M\subset \mbox{Nil}_3$ is a horizontal minimal graph and $\eta $
denotes a unit normal vector field to $M$, then $\langle X,\eta \rangle$
is a nonvanishing Jacobi function on $M$. Hence, Lemma~\ref{lemaFC}
insures that $M$ is stable. In fact, $M$ is area-minimizing by the
standard calibration argument: Suppose that $u\colon \Omega \to \R $
is a function defining a horizontal minimal graph over a domain
$\Omega \subset \{ (0,y,z)\mid (y,z)\in \R^2\} $. Consider the
differential $2$-form $\omega$ defined on the ``horizontal
cylinder'' $\cup _{t\in   \R } \phi _t(\Omega )$ by
\[
 \omega _{\phi _t(p)}(v_1,v_2)=dV_{F(p)}\left(
 \left( d\phi _{u(p)-t}\right) _{\phi _t(p)}(v_1),
 \left( d\phi _{u(p)-t}\right) _{\phi _t(p)}(v_2),\eta _{F(p)}
 \right),
\]
where $p\in \Omega $, $t\in \R $, $v_1,v_2\in
 T_{\phi _t(p)}\mbox{Nil}_3$, $dV$ is the volume form for the metric
 $ds^2$ on Nil$_3$, $F$ is given by (\ref{eq:Fgrafo}) and
 $\{ \phi _t\} _{t\in \R } $ is the one-parameter group generated by $X$
(which consists of isometries of Nil$_3$). Then $\omega $ is closed
(since $M$ is minimal), has unit comass and $M$ is a calibrated
surface for $\omega $, i.e. $\omega |_M$ equals the area element of
$M$. By the Fundamental Theorem of Calibrations (see e.g.
Morgan~\cite{mo2} Theorem~6.4, also see Harvey and
Lawson~\cite{hl2}), each compact subdomain of the graph of $u$ is
area minimizing for its boundary.

A natural version of the Bernstein problem in Nil$_3$ is to consider
entire horizontal minimal graphs (or more generally, with constant
mean curvature $H\in \R $) in the sense above.  As we said in
Remark~\ref{remark3.4}, the Bernstein problem for the minimal case
in the vertical direction (i.e. when the minimal surface is an
entire vertical graph) has been solved by Fern\'andez and
Mira~\cite{fm1} and the solutions of this problem are essentially
described in terms of holomorphic quadratic differentials on $\C $
(not identically zero) or on $\D $. Regarding the case of nonzero
constant mean curvature, Figueroa, Mercuri and Pedrosa~\cite[Theorem
4]{FMP} proved that there are no entire vertical graphs of nonzero
constant mean curvature in Nil$_3$. Their argument, based on the
maximum principle applied to spheres with the appropriate value of
the mean curvature (see the third paragraph in the proof of
Theorem~\ref{th:localgraph} for a similar argument), works without
changes when exchanging vertical graphs by horizontal ones. Hence,
we only have to concentrate on the minimal case of the horizontal
Bernstein problem. Theorem~\ref{thm4.1} below characterizes the
examples $\Pi_{a,b}$ above among solutions of the horizontal
Bernstein problem with parabolic conformal structure.

An orientable surface $M$ of Nil$_3$ with unit normal vector field $\eta $
is said to be a {\it horizontal multigraph} when $\langle X,\eta \rangle$
does not vanish. By Lemma~\ref{lemaFC}, every horizontal multigraph is
stable.

\begin{theorem}
  \label{thm4.1}
Let $M\subset \mbox{\rm Nil}_3$ be a complete minimal horizontal
multigraph. If $M$ is parabolic, then it is a vertical plane (which
is entire).
\end{theorem}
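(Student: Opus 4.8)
The plan is to play off two Killing fields of $\mbox{\rm Nil}_3$ against each other through Theorem~\ref{thm1}. Since $M$ is a horizontal multigraph, $\langle X,\eta\rangle$ is a nowhere-vanishing Jacobi function, so after fixing the orientation I may assume $\langle X,\eta\rangle>0$; by Lemma~\ref{lemaFC} this already gives that $M$ is stable, i.e. $-L\geqslant 0$. On the other hand $E_3$ is a \emph{unit} Killing field, so the angle function $\eta_3=\langle E_3,\eta\rangle$ is a \emph{bounded} Jacobi function. I would then apply Theorem~\ref{thm1} with $u=\eta_3$ and $v=\langle X,\eta\rangle$: both lie in $\ker L$, so $uvLu-u^2Lv=0\ge 0$, and parabolicity forces $\eta_3/\langle X,\eta\rangle\equiv c$ for some $c\in\R$. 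Equivalently, the Killing field $Z:=E_3-cX$ satisfies $\langle Z,\eta\rangle=\eta_3-c\langle X,\eta\rangle=0$, so $Z$ is everywhere tangent to $M$ and $M$ is invariant under the one-parameter group it generates. (This is the dichotomy recorded in the first bulleted consequence of Theorem~\ref{thm1} in Section~\ref{secstable}, applied to the bounded field $E_3$.)

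The argument then splits according to whether $c$ vanishes. If $c=0$ then $\eta_3\equiv 0$, so $M$ is invariant under the vertical translations generated by $E_3$; hence $M=\pi^{-1}(\gamma)$ is a vertical cylinder over a curve $\gamma\subset\R^2$, and minimality means $\gamma$ has geodesic curvature $2H=0$, i.e. $\gamma$ is a straight line. Such a cylinder is exactly a vertical plane $\Pi_{a,b}$, which is entire, so we are done in this case.

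The remaining case $c\neq 0$ is where I expect all the difficulty to concentrate. Now $\eta_3=c\langle X,\eta\rangle$ has constant sign, so $M$ is simultaneously a vertical multigraph; as $\kappa=0$ and $H=0$ give $H^2=\tfrac{-\kappa}4$, Corollary~\ref{th:planes} identifies $M$ as an entire vertical graph, and $Z$ is a nowhere-zero tangent Killing field whose orbits are complete lines. To reach a contradiction one must prove that such a $Z$-invariant complete minimal surface \emph{cannot} be parabolic. The plan is to use the $Z$-invariance to put the induced metric in the warped form $ds_M^2=ds^2+|Z|^2\,dt^2$, where $t$ is the flow parameter of $Z$, $s$ is arclength along a transversal to the orbits, and $|Z|^2=c^2+(1-cy)^2$ is constant along each orbit; minimality pins the transversal down through a second-order ODE, and since parabolicity is a conformal invariant in dimension two, the conformal type of $M$ is governed by whether $\int ds/|Z|$ converges (giving a hyperbolic strip) or diverges (giving $\C$).

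The main obstacle is exactly this conformal-type estimate. One computes $|\nabla^M y|^2=1-\langle E_2,\eta\rangle^2\le 1$, so the ambient coordinate $y$ is $1$-Lipschitz on $M$ and $|Z|$ grows at most linearly in intrinsic distance along the transversal; a naive bound then yields $\int ds/|Z|\ge \int ds/(1+\mathrm{const}\cdot s)=\infty$, i.e. conformal type $\C$ and hence \emph{parabolicity}, which is the opposite of what is needed. Thus the crux is to extract finer information from the minimal-surface ODE (or from the global structure of entire vertical graphs in $\mbox{\rm Nil}_3$) to force faster growth of $|Z|$ and close off the case $c\neq 0$; I would devote the bulk of the effort to this step, since everything preceding it is a direct application of Theorem~\ref{thm1} together with the cylinder/plane classification.
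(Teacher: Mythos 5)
Your reduction is exactly the paper's: applying Theorem~\ref{thm1} to $u=\eta_3$ and $v=\langle X,\eta\rangle>0$ gives $\eta_3=c\,\langle X,\eta\rangle$, the case $c=0$ yields a vertical plane via Lemma~\ref{lem:cylinders}, and for $c\neq 0$ the surface is an entire vertical graph (Corollary~\ref{th:planes}) invariant under the nowhere-vanishing tangent Killing field $Z=E_3-cX$. But the case $c\neq 0$, which you correctly identify as carrying all the difficulty, is left open, and the strategy you propose for closing it aims at the wrong contradiction. You try to show that such a $Z$-invariant complete minimal surface cannot be parabolic; your own estimate ($y$ is $1$-Lipschitz on $M$, so $|Z|$ grows at most linearly along a transversal and $\int ds/|Z|$ diverges) already indicates that these surfaces are conformally $\C$, hence parabolic, and there is no reason to expect a finer analysis of the invariance ODE to reverse this. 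The paper never derives a contradiction with parabolicity at this stage.

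The missing idea is to invoke the classification by Figueroa, Mercuri and Pedrosa~\cite{FMP} of minimal surfaces in Nil$_3$ invariant under a one-parameter group of isometries: up to an ambient isometry, $M$ must be one of the explicit entire graphs $M_\te$ of equation~(\ref{eq:fmp}). Since isometries of Nil$_3$ preserve the vertical direction and the horizontal distribution spanned by $E_1,E_2$, it then suffices to check that no $M_\te$ is a horizontal multigraph in \emph{any} horizontal direction $X_\al=\cos\al\,(E_1+yE_3)+\sin\al\,(E_2-xE_3)$. A direct computation with the tangent frame $T_1=E_1+yE_3$, $T_2=E_2+\sinh(2\te)\sqrt{1+y^2}\,E_3$ gives $\det(T_1,T_2,X_\al)=-\sin\al\left(x+\sinh(2\te)\sqrt{1+y^2}\right)$, so $\langle X_\al,\eta\rangle$ vanishes along the curve $x=-\sinh(2\te)\sqrt{1+y^2}$ (and $X_0=T_1$ is everywhere tangent). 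The contradiction is therefore with the horizontal multigraph hypothesis itself, not with parabolicity, which enters only through Theorem~\ref{thm1} and Corollary~\ref{th:planes} to produce the extra Killing symmetry. Without this classification step (or a substitute for it), your argument does not close.
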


\begin{proof}
  Suppose $M$ is not a vertical plane. By Corollary~\ref{th:planes},
  $M$ is an entire vertical graph.
  Consider the Jacobi functions $u=\langle \eta ,E_3\rangle $, $v=\langle
  \eta ,X\rangle $, where $\eta $ is a unit normal vector field on~$M$.  Since
  $M$ is a horizontal multigraph, up to a change of orientation we can
  assume $v>0$.  Hence Theorem~\ref{thm1} gives that $u=\l v$ for some
  $\l \in \R -\{ 0\} $; this is, $\langle \eta ,\l X-E_3\rangle =0$ on
  $M$. Then $\l X-E_3$ restricts to a tangent vector field along $M$,
  which has no zeros since $\l \neq 0$ and $E_1,E_3$ are linearly
  independent.  As $\l X-E_3$ is Killing, then $M$ is invariant under
  the one-parameter group of isometries generated by $\l X-E_3$. By
  the results in Figueroa, Mercuri and Pedrosa~\cite{FMP}, $M$ must be
  ambiently isometric to an entire minimal vertical graph $M_\te$
  defined by
  \begin{equation}\label{eq:fmp}
    z=z(x,y)=\frac{xy}{2}+\frac{\sinh(2\te)}{2}\left[y\sqrt{1+y^2} +\ln
      \left( y+\sqrt{1+y^2}\right) \right] ,\quad (x,y)\in \R^2,
  \end{equation}
  for some $\te\in \R $. Since all isometries of Nil$_3$ preserve both
 the vertical direction and the horizontal distribution generated by
 $E_1,E_2$ (see equation (\ref{eq:Ei})), then it suffices to show that
for all $\te \in \R $, the surface $M_{\theta}$ is not a horizontal
multigraph in any horizontal direction, i.e. the function $\langle
X_{\al },\eta \rangle $ has a zero on $M_{\theta}$ for every $\al \in
[0,2\pi )$, where $X_{\al }$ is the horizontal Killing vector field
of Nil$_3$ given by
  \[
  X_\al=\cos\al\, (E_1+yE_3)+\sin\al\, (E_2-x E_3).
  \]
The vector fields
  \[
  T_1=E_1+y E_3,\qquad
  T_2= E_2+\sinh(2\te) \sqrt{1+y^2} E_3
  \]
  restrict to $M_\te$ as a global basis of the tangent bundle of
  $M_\te$, and
  \[
  \det(T_1,T_2,X_\al)=-\sin\al\left(x+\sinh(2\te)\sqrt{1+y^2}\right) .
  \]
  Therefore, $X_\al$ is tangent to $M_\te$ (hence $\langle X_{\al },\eta \rangle
  $ vanishes) along the curve with
  $x=-\sinh(2\te)\sqrt{1+y^2}$, for every $\al\in[0,2\pi)$.
\end{proof}

\medskip

\section{Appendix: Vertical cylinders in
  $\E(\kappa,\tau)$.}
\label{secappendix}
Consider the Riemannian submersion $\pi:\E(\kappa,\tau)\to
\M^2(\kappa)$. The vertical fibers of this submersion are geodesics,
and they are also integral curves of the unit Killing
vector field $E_3$, which generates the kernel of $d\pi $.
$\E (\kappa ,\tau )$ is parallelizable: there exists a global
orthonormal frame $(E_1,E_2,E_3)$ of the tangent bundle, and
\[
[E_1,E_2]=2\tau E_3,\quad
[E_2,E_3]=\sigma E_1,\quad
[E_3,E_1]=\sigma E_2
\]
where
\[
\sigma =\left\{ \begin{array}{ll}
0 & \mbox{if }\tau =0,
\\
\frac{\kappa }{2\tau } & \mbox{if }\tau \neq 0.
\end{array}\right.
\]
In particular, the 2-dimensional horizontal distribution
Span$(E_1,E_2)=\ker(d\pi )^{\perp }$ is integrable if $\tau =0$ (in
this case $\E (\kappa ,\tau )=\M ^2(\kappa )\times \R $) and it is
completely nonintegrable otherwise.  Denote by $\overline{\nabla}$
the Riemannian connection on $\E(\kappa,\tau)$. The nonvanishing
Christoffel symbols $\overline\Gamma_{ij}^k= \langle\overline{\nabla}_{E_i}
E_j,E_k\rangle$ are the following:
\[
\overline{\Gamma}_{12}^3= \overline{\Gamma}_{23}^1=
-\overline{\Gamma}_{21}^3=-\overline{\Gamma}_{13}^2=\tau\, , \qquad
\overline{\Gamma}_{32}^1=-\overline{\Gamma}_{31}^2=
\tau-\sigma
\]
and
\begin{equation}
\label{eq:1}
\left.
\begin{array}{l}
  \overline{\nabla}_{E_i} E_i=0, \qquad i=1,2,3, \\
  \overline{\nabla}_{E_1} E_2=-\overline{\nabla}_{E_2} E_1=\tau E_3,\\
  \overline{\nabla}_X E_3=\tau X\times E_3,
\end{array}
\right\}
\end{equation}
for any vector field $X$ on $\E(\kappa,\tau)$, where $\times $ denotes
the vector product in $\E (\kappa ,\tau )$ with respect to the direct
orthonormal frame $(E_1,E_2,E_3)$.

Given a curve $\g\subset \M^2(\kappa )$, we define the vertical
cylinder over $\g$ as $C_\g=\pi^{-1}(\g)$.  If we parameterize~$\g $
by its arc-length, then a global orthonormal frame of the tangent
bundle of $C_\g$ is given by $\{ \g', E_3\}$ (here we are identifying
$\g '$ with its isometric preimage under $d\pi $).  Let $\eta $ be the
unit normal vector field along $C_\g$ such that $\{\g', E_3, \eta \}$ is
positively oriented.

\begin{lemma}\label{lem:cylinders}
  The mean curvature with respect to $\eta $, the Gauss curvature and the
  extrinsic curvature of $C_{\g }$ are respectively given by
    \[
    H=\frac{k_\g}{2},\qquad K=0,\qquad K_{ext}=-\tau^2,
    \]
    where $k_\g$ is the geodesic curvature of $\g$ in $\M^2(\kappa)$.
\end{lemma}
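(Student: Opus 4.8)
The plan is to compute the second fundamental form of $C_\g$ directly in the orthonormal tangent frame $\{\g',E_3\}$, read off $H$ and $K_{ext}$ from the resulting $2\times 2$ matrix, and then recover the Gauss curvature from the Gauss equation. First I would write $\g'=\cos\t\,E_1+\sin\t\,E_2$ for an angle function $\t$ along $\g$ (the horizontal lift depends only on the base point, so $E_3[\t]=0$) and take the unit normal $\eta=\g'\times E_3$, consistent with the stated orientation of $\{\g',E_3,\eta\}$. The shape operator $A$ is then determined by the three numbers $\langle\overline{\nabla}_{\g'}\g',\eta\rangle$, $\langle\overline{\nabla}_{\g'}E_3,\eta\rangle$ and $\langle\overline{\nabla}_{E_3}E_3,\eta\rangle$, the second fundamental form being symmetric because brackets of tangent fields stay tangent.

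The two easy entries come straight from (\ref{eq:1}). From $\overline{\nabla}_{E_3}E_3=0$ we get that the $E_3E_3$ entry vanishes, and from $\overline{\nabla}_X E_3=\tau\,X\times E_3$ applied to $X=\g'$ we get $\overline{\nabla}_{\g'}E_3=\tau\,\g'\times E_3=\tau\eta$, so the off-diagonal entry equals $\tau$. The remaining entry is the main computation: differentiating $\g'=\cos\t\,E_1+\sin\t\,E_2$ along $\g'$ and using $\overline{\nabla}_{E_1}E_2=-\overline{\nabla}_{E_2}E_1=\tau E_3$ together with $\overline{\nabla}_{E_i}E_i=0$, the vertical ($E_3$) contributions cancel and one is left with the horizontal vector $\t'\,(-\sin\t\,E_1+\cos\t\,E_2)=\t'\,(E_3\times\g')$. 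Here $\t'=\g'[\t]$ is exactly the geodesic curvature $k_\g$ of $\g$ in $\M^2(\kappa)$, which is the O'Neill identity that the horizontal part of $\overline{\nabla}_{\g'}\g'$ is the lift of $\nabla^{\M}_{\g'}\g'$. Since $E_3\times\g'=-\eta$, this yields the diagonal entry $k_\g$ up to sign. Thus, in the frame $\{\g',E_3\}$,
\[
A=\begin{pmatrix} k_\g & \tau\\ \tau & 0\end{pmatrix},
\]
from which $H=\tfrac12\operatorname{tr}A=\tfrac{k_\g}{2}$ and $K_{ext}=\det A=-\tau^2$ follow immediately.

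Finally, for the Gauss curvature I would invoke the Gauss equation $K=K_{ext}+\overline{K}(\g'\w E_3)$, where $\overline{K}$ denotes the ambient sectional curvature of the tangent plane. The plane spanned by a horizontal unit vector and $E_3$ is a \emph{vertical} plane of $\E(\kappa,\tau)$, whose sectional curvature is $\tau^2$ (computable from (\ref{eq:1}) and the bracket relations, and independent of the horizontal direction by rotational symmetry around the fiber). Hence $K=-\tau^2+\tau^2=0$.

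I expect the middle step to be the main obstacle: carrying out $\overline{\nabla}_{\g'}\g'$ cleanly, checking that the vertical parts cancel, and correctly identifying $\t'$ with the base geodesic curvature $k_\g$ through the Riemannian submersion. The only delicate bookkeeping is keeping the orientation and sign conventions consistent across $\eta$, $k_\g$ and $A$ (this fixes the sign of the diagonal entry, hence of $H$). The remaining ingredients—the $\tau$ off-diagonal entry, $\det A=-\tau^2$, and the vertical-plane curvature $\tau^2$—are routine applications of the connection formulas (\ref{eq:1}).
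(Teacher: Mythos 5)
This is essentially the paper's proof: you build the same second fundamental form matrix $\bigl(\begin{smallmatrix} k_\g & \tau\\ \tau & 0\end{smallmatrix}\bigr)$ in the frame $\{\g',E_3\}$ (the paper's equation (\ref{eq:II})), read off $H$ and $K_{ext}$, and then get $K=0$ from the Gauss equation together with the value $\tau^2$ for the ambient sectional curvature of the vertical tangent plane; the computations you carry out are correct, and the sign bookkeeping you worry about for the $\g'\g'$-entry is exactly the right concern. One parenthetical claim is false, though not load-bearing: for the horizontal-lift extension of $\g'$ one has $E_3[\t]=-\sigma$ rather than $0$ whenever $\sigma=\kappa/(2\tau)\neq 0$, because the bracket relations $[E_3,E_1]=\sigma E_2$, $[E_3,E_2]=-\sigma E_1$ make the left-invariant frame $E_1,E_2$ rotate along the fibers; the correct invariance statement is $[\g',E_3]=0$, which is precisely what the paper proves (tangent because it is a bracket of tangent fields, normal by direct computation, hence zero) in order to discard the $\overline{\nabla}_{[\g',E_3]}E_3$ term when evaluating $\overline{K}=\tau^2$. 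Your second-fundamental-form computation only differentiates $\t$ along $\g'$, so it is unaffected, and the deferred verification of $\overline{K}=\tau^2$ still comes out right by tensoriality of the curvature operator --- just do not mix the (false) assumption $E_3[\t]=0$ with the (true) identity $[\g',E_3]=0$ in that computation, or the $\sigma$-terms will fail to cancel.
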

\begin{proof}
  A straightforward computation using (\ref{eq:1}) gives that the
  matrix of the second fundamental form of $C_\g$ with
  respect to the orthonormal basis $\{\g',E_3\}$ of the tangent bundle
  to $C_\g$ is given by
  \begin{equation}
  \label{eq:II}
  \mbox{II}=\left(\begin{array}{cc} \langle \overline{\nabla}_{\g'}
      \g',\eta \rangle&
      \langle \overline{\nabla}_{\g'} E_3,\eta \rangle\\
      \langle \overline{\nabla}_{E_3}\g',\eta \rangle & \langle
      \overline{\nabla}_{E_3} E_3,\eta \rangle
    \end{array}\right)
  =\left(\begin{array}{cc}
      k_{\g} & \tau\\
      \tau & 0
    \end{array}\right)
   \end{equation}
  where $\langle\ ,\ \rangle$ denotes the Riemannian metric on $\E(\kappa,\tau)$.
In order to compute $K$, we use the Gauss equation:
\[
K=\overline{K}+\det (\mbox{II})=\overline{K}-\tau ^2,
\]
where $\overline{K}$ is the sectional curvature of the tangent plane to $C_{\g}$, i.e.
\[
\overline{K}=\langle \overline{\nabla }_{\g '}\overline{\nabla }_{E_3}E_3,\g '\rangle
-\langle \overline{\nabla }_{E_3}\overline{\nabla }_{\g '}E_3,\g '\rangle
-\langle \overline{\nabla }_{[\g ',E_3]}
E_3,\g '\rangle .
\]
Equations (\ref{eq:1}) and (\ref{eq:II}) imply that the first term
in the last expression vanishes and the second one equals $\tau ^2$.
It remains to check that the third one is zero and we will have that
$K=0$. Now, $[\g ',E_3]=\overline{\nabla }_{\g '}E_3-
\overline{\nabla }_{E_3}\g '$. Note that $\overline{\nabla }_{\g '}E_3=
\tau \eta $ is normal to $C_{\g }$ and that $\langle \overline{\nabla }_{E_3}\g ',
\g '\rangle =\langle \overline{\nabla }_{E_3}\g ',E_3\rangle =0$. Hence,
$[\g ',E_3]$ is normal to $C_{\g}$. Since
$[\g ',E_3]$ is clearly tangent to $C_{\g }$, it must be zero.
\end{proof}

\begin{proposition}\label{prop:cyl}
  Let $\g\subset \M^2(\kappa )$ be a complete curve with constant geodesic curvature
  $k_\g\in \R $.  Then the cylinder $C_\g$ is stable if and only if $\kappa\leq -k_\g^2$.
\end{proposition}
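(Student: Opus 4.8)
The plan is to reduce stability of $C_\g$ to a spectral condition for a Schr\"odinger operator with \emph{constant} potential on a flat parabolic surface, and then simply read off the answer.

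First I would make the stability operator explicit. By Lemma~\ref{lem:cylinders}, the cylinder $C_\g$ has mean curvature $H=k_\g/2$, Gauss curvature $K=0$, and $\det(A)=K_{ext}=-\tau^2$. Substituting these data into the expression~(\ref{eq:q}) for the stability operator, the $\tau^2$-terms cancel and one finds that the potential is a \emph{constant}:
\[
L=\Delta+q,\qquad q=-K+\left(3H^2+\kappa-\tau^2+(H^2-\det A)\right)=k_\g^2+\kappa .
\]
Hence the index form is $\mathcal{Q}(f,f)=\int_{C_\g}\bigl(|\nabla f|^2-(k_\g^2+\kappa)f^2\bigr)$ for $f\in C_0^\infty(C_\g)$, and stability of $C_\g$ is precisely the statement $\mathcal{Q}(f,f)\geq 0$ for all such $f$.

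Next I would record the geometry of $C_\g$. Since $K=0$ and $[\g',E_3]=0$ (shown in the proof of Lemma~\ref{lem:cylinders}), the orthonormal frame $\{\g',E_3\}$ is a commuting flat coordinate frame, so the induced metric is a flat product metric; as $\g$ is complete and $C_\g$ is orientable and noncompact, $C_\g$ is isometric either to $\R^2$ (when $\g$ is non-closed) or to a flat cylinder $\esf^1\times\R$ (when $\g$ is a closed curve). In either case $C_\g$ has at most quadratic area growth, so by Cheng--Yau it is parabolic, it satisfies property $(\star)$, and it has infinite total area.

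Finally I would settle the two implications. If $\kappa\leq -k_\g^2$ then $q=k_\g^2+\kappa\leq 0$, so the integrand $|\nabla f|^2-qf^2$ is pointwise nonnegative and $\mathcal{Q}(f,f)\geq 0$ for all $f\in C_0^\infty(C_\g)$; thus $C_\g$ is stable. Conversely, if $\kappa>-k_\g^2$ then $q>0$, and I would test $\mathcal{Q}$ against the cutoff sequence $\{\varphi_j\}$ provided by property $(\star)$: one has $\int_M|\nabla\varphi_j|^2\to 0$ while $\int_M\varphi_j^2\geq \mbox{Area}(\varphi_j^{-1}(1))\to\mbox{Area}(C_\g)=+\infty$, whence
\[
\mathcal{Q}(\varphi_j,\varphi_j)=\int_M|\nabla\varphi_j|^2-q\int_M\varphi_j^2\longrightarrow -\infty ,
\]
so $\mathcal{Q}$ fails to be nonnegative and $C_\g$ is unstable. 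Combining the two directions gives stability if and only if $\kappa\leq -k_\g^2$.

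The only point requiring care is the instability direction: one must know that the flat surface is genuinely parabolic with infinite area, so that energy-vanishing cutoffs force $\mathcal{Q}$ negative against a strictly positive constant potential. This is exactly where the fact that the bottom of the spectrum of $-\Delta$ on $\R^2$ (or $\esf^1\times\R$) equals $0$, encoded in property $(\star)$, enters; everything else is the routine substitution in the first paragraph.
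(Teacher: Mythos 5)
Your proof is correct and follows essentially the same route as the paper: both arguments reduce to the observation that the stability operator of $C_\g$ is $L=\Delta+(k_\g^2+\kappa)$ with \emph{constant} potential on a flat surface (you get this from~(\ref{eq:q}) and Lemma~\ref{lem:cylinders}, the paper computes $|A|^2+\mbox{Ric}(\eta)=(k_\g^2+2\tau^2)+(\kappa-2\tau^2)$ directly; the $\tau^2$-cancellation is the same), and both then exploit that the infimum of the Rayleigh quotient of $-\Delta$ over compactly supported functions on the flat surface is zero. The paper phrases this last step as $\inf_\Omega\l_1(\Omega)=\inf_\Omega\l_1^\Delta(\Omega)-(k_\g^2+\kappa)=-(k_\g^2+\kappa)$, whereas you run the parabolicity cutoffs $\{\varphi_j\}$ through the index form by hand; these are two packagings of the same fact.

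One small inaccuracy: your claim that $C_\g$ is noncompact with infinite area is not always true. In a Berger sphere ($\kappa>0$, $\tau\neq 0$) the fibers of $\pi$ are circles and $\g$ is a closed curve, so $C_\g$ is a compact flat torus. This does not damage the argument: in that case take $\varphi_j\equiv 1$, which gives ${\mathcal Q}(1,1)=-(k_\g^2+\kappa)\,\mbox{Area}(C_\g)<0$ whenever $k_\g^2+\kappa>0$ (and $\kappa\leq -k_\g^2$ is impossible there anyway, consistently with Remark~\ref{lem:cylinders}ff.). So the limit need not be $-\infty$, only negative, and the equivalence you prove is unaffected. The stable direction (pointwise nonnegativity of the integrand when $q\leq 0$) is clean and needs no parabolicity at all.
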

\begin{proof}
The stability operator of $C_\g$ is given by
$L=\Delta+|A|^2+\mbox{Ric}(\eta )$,  where $A$ is the
shape operator of $C_\g$.  By equation (\ref{eq:II}) we have $|A|^2=
  k_\g^2+2\tau^2$, while a direct calculation using (\ref{eq:1}) gives $\mbox{Ric}(\eta )=
  \kappa-2\tau^2$. Therefore, $L$ is just a translation
   (with constant potential) of the Laplacian on $C_{\gamma }$, namely
  \[
  L=\Delta+k_\g^2+\kappa.
  \]
Hence, the Dirichlet problem with zero boundary values for this
operator has first eigenvalue
$\l_1(\Omega)=\l_1^\Delta(\Omega)-(k_\g^2+\kappa)$ on every
relatively compact domain $\Omega\subset C_{\g }$, where
$\l_1^\Delta(\Omega)$ is the first eigenvalue of the Dirichlet
problem with zero boundary values for the Laplacian on $C_{\g }$.
Since the induced metric on $C_{\g }$ is flat, then
$\mbox{inf}_\Omega\ \l_1(\Omega)=-(k_\g^2+\kappa)<0$, from where the
proposition follows.
\end{proof}

\begin{remark}
 {\rm
  From Proposition~\ref{prop:cyl}, all constant
  mean curvature cylinders $C_\g\subset \E (\kappa ,\tau )$ are
  unstable when $\kappa>0$, and the only
  stable constant mean curvature cylinders
 in $\E (0,\tau )$ are vertical planes (which are minimal).
}
\end{remark}

\bibliographystyle{plain}

\end{document}